\newcommand{\Z}{{\mathbb Z}}
\newcommand{\R}{{\mathbb R}}
\newcommand{\LR}{{L^2({\mathbb R})}}
\newcommand{\Ltwo}{L^2([0,1])}
\newcommand{\cl}{{\rm cl}_{L^2}\,}
\newcommand{\nep}{n_{\epsilon}}
\theoremstyle{plain}
\newtheorem{theorem}{Theorem}
\newtheorem{lemma}[theorem]{Lemma}
\theoremstyle{definition}
\theoremstyle{remark}
\newtheorem{rem}[theorem]{Remark}
\def\hypergeom#1#2#3#4#5{{}_#1 F_{#2}\left({#3\atop#4}; \ #5\right)}
\title{A hypergeometric basis for the Alpert multiresolution analysis}
\subjclass[2010]{42C40, 41A15, 33C20}
\keywords{Multiwavelets, Hypergeometric functions, Orthogonal polynomials}
\author[J.~Geronimo]{Jeffrey~S.~Geronimo}
\address{JSG, School of Mathematics, Georgia Institute of Technology,
Atlanta, GA 30332--0160, USA}
\email{geronimo@math.gatech.edu}
\thanks{JSG is partially supported by Simons Foundation Grant \#210169.}
\author[P.~Iliev]{Plamen~Iliev}
\address{PI, School of Mathematics, Georgia Institute of Technology,
Atlanta, GA 30332--0160, USA}
\email{iliev@math.gatech.edu}
\thanks{PI is partially supported by Simons Foundation Grant \#280940.}
\begin{document} 
\date{November 12, 2014}

\begin{abstract}
We construct an explicit orthonormal basis of piecewise ${}_{i+1}F_{i}$ hypergeometric polynomials for the Alpert multiresolution analysis. The Fourier transform of each basis function is written in terms of ${}_2F_3$ hypergeometric functions.  
Moreover, the entries in the matrix equation connecting the wavelets with the scaling functions are shown to be balanced ${}_4 F_3$ hypergeometric functions evaluated at $1$, which allows to compute them recursively via three-term recurrence relations. 

The above results lead to a variety of new interesting identities and orthogonality relations reminiscent to classical identities of higher-order hypergeometric functions and orthogonality relations of Wigner $6j$-symbols.
\end{abstract}

\maketitle

\section{Introduction}\label{se1}

Wavelet theory has had proved useful in many areas of mathematics and 
engineering such as functional analysis, Fourier analysis, and signal 
processing. The Alpert multiresolution analysis is associated with the 
spline spaces of piecewise polynomials of degree at most $n$, discontinuous at 
the integers. The generators for this multiresolution analysis are the Legendre polynomials, restricted and scaled to $[0,1)$ and set to zero otherwise. 
Using the symmetry inherent in this system, Alpert \cite{ala,alb} constructed a set of wavelet functions and then 
used them to analyze various integral operators (see also \cite{abcr}). This 
basis has been used in \cite{dd} with the moment interpolating technique to 
construct smooth multiwavelets. An interesting problem from both computational and theoretical point of view is to find a 
wavelet basis  which can be  written in terms of explicit formulas.  

In \cite{gm} an analysis was performed on the coefficients in the refinement equation satisfied by the modified Legendre 
polynomials and it was shown that the entries in these matrices could be written as multiples of certain generalized Jacobi polynomials evaluated 
at $1/2$ and that these entries satisfy generalized eigenvalue equations. Moreover a new basis of wavelets was implicitly introduced through the matrix equation connecting the wavelets to the scaling functions by considering  upper triangular matrices with positive diagonal entries. 

In the present paper we provide explicit formulas for these wavelets, their Fourier transforms, and related matrix coefficients in terms of hypergeometric functions. We give  a different construction and direct proofs of the characteristic properties of this new basis thus making the paper self-contained. Our results imply new identities between higher-order hypergeometric functions and suggest interesting connections to representation theory.

The paper is organized as follows. In Section~\ref{se2} we set the notation  and review the elements of multiresolution analysis needed for the sequel. In Section~\ref{se3} we postulate the orthogonality and symmetry conditions which characterize the wavelet functions. We show that these properties are satisfied by a sequence of piecewise polynomials supported on $(-1,1)$, which on $(-1,0)$ and $(0,1)$ can be written as ${}_{i+1}F_{i}$ hypergeometric functions. We also exhibit families of differential equations satisfied by these functions. In Section~\ref{se4} we explain the differences between the wavelets constructed here and the ones introduced by Alpert. We also prove that the entries in the matrices relating our wavelets to the scaling functions can be written as balanced ${}_4 F_3$ hypergeometric functions evaluated at $1$. In particular, these formulas imply that the matrices are upper triangular with positive diagonal entries, thus relating the formulas here to the implicit construction in \cite{gm}. We also indicate how these results are reminiscent to classical identities of higher-order hypergeometric functions and orthogonality relations of Wigner $6j$-symbols. In Section~\ref{se5} we derive a simple closed formula for the Fourier transform of these wavelets in terms of ${}_2F_3$ hypergeometric functions and we write  associated differential equations for them. Finally, in Section~\ref{se6}, we give recurrence formulas for the entries in the matrices of the wavelet equation.

\section{Preliminaries}\label{se2}
Let
$\phi_0,\dots,\phi_r$ be compactly supported $L^2$-functions, and suppose that
$V_0 = \cl{\rm span}\{\phi_i({\cdot}-j): i = 0,1,\dots,r,\ j\in\Z\}$.  Then
$V_0$ is called a {\it finitely generated shift invariant\/ {\rm (FSI)}
space}.  Let $(V_p)_{p\in\Z}$ be given by
$V_p = \{\phi(2^p{\cdot}): \phi\in V_0\}$.  Each space $V_p$ may be
thought of as approximating $L^2$ at a different resolution depending
on the value of $p$.  The sequence $(V_p)$ is called a
{\it multiresolution analysis} (MRA) \cite{da,ghm,gl} generated by {$\phi_0,\dots,\phi_r$} if
(a) the spaces are nested, $\cdots\subset V_{-1}\subset V_0\subset V_1
\subset\cdots$, and (b) the generators $\phi_0,\dots,\phi_r$ and 
their integer translates form a Riesz basis for $V_0$.  Because of (a)
and (b) above, we can write
\begin{equation}\label{nest}
V_{j+1} = V_j \oplus W_j, \qquad\forall j\in\Z. 
\end{equation}
The space $W_0$ is called the {\it wavelet space}, and if
$\psi_0,\dots,\psi_r$ generate a shift-invariant basis for $W_0$, then
these functions are called {\it wavelet functions}.
If, in addition, $\phi_0,\dots,\phi_r$ 
and their integer translates form an orthogonal basis for $V_0$,
then $(V_p)$ is called an {\it orthogonal MRA}.  Let $S_{-1}^n$ be the space of
polynomial splines of degree $n$ continuous except perhaps at the integers,
and set $V_0^n = S_{-1}^n \cap L^2(\R)$.  With $V_p^n$ as above these spaces form 
a multiresolution analysis. If $n=0$ the  multiresolution analysis obtained is 
associated with the Haar wavelet while for $n>0$ they were  introduced by 
Alpert \cite{ala, alb}. 
Let 
$$
\phi_j(t)= \hat p_j(2t-1)\chi_{[0,1)}(t)
$$
where $\hat p_j(t)$ is the  Legendre polynomial \cite{sz} of degree $j$ orthonormal on $[-1,1]$  with 
positive leading coefficient i.e. $\hat p_j(t)=k_j t^j +\text{lower degree terms}$, with $k_j>0$ and
$$
\int_{-1}^1 \hat p_j(t)\hat p_k(t)dt=\delta_{k,j}.
$$ 
These polynomials have the following representation in terms of
a ${}_2 F_1$ hypergeometric function \cite[p. 80]{sz},
\begin{equation}\label{hyperone}
\hat p_n(t) = \frac{\sqrt{2n+1}}{\sqrt{2}}
  \,\hypergeom21{-n,\ n+1}{1}{\frac{1-t}{2}},
\end{equation}
where formally,
$$
\hypergeom{p}{q}{a_1,\ \dots\ a_p}{b_1,\ \dots\ b_q}{t}
  = \sum_{i=0}^{\infty}\frac{(a_1)_i\dots(a_p)_i}{(b_1)_i\dots(b_q)_i(1)_i}t^i
$$ 
with $(a)_0=1$ and $(a)_i = a(a+1)\ldots(a+i-1)$ for $i>0$.  Since one 
of the numerator parameters in the definition of $\hat p_n$ is a negative 
integer, the series in  equation~\eqref{hyperone} has only finitely many 
terms. With the normalization taken above  $||\phi_j||^2_{\LR}=\frac{1}{2}$, 
and
\begin{equation}\label{Phi}
\Phi_n=\left[\begin{matrix}\phi_0&\cdots&\phi_n\end{matrix}\right]^T
\end{equation}
and its integer translates form an orthogonal basis for $V_0$. For the convenience in later computations we set 
\begin{equation}\label{PN}
P_n(t)=\left[\begin{matrix}\hat p_0(t)\\\vdots\\\hat p_n(t)\end{matrix}\right].
\end{equation}
Equation~\eqref{nest} implies the existence of the {\it refinement} equation,  
\begin{equation}\label{refin}
\Phi_n\left(\frac{t}{2}\right)=C^n_{-1}\Phi_n(t)+C^n_1\Phi_n(t-1),
\end{equation}
see \cite{da,gm}. In order to exploit the symmetry of the Legendre polynomials we shift $t\to t+1$
so that 
\begin{align}\label{refsym}
\Phi_n\left(\frac{t+1}{2}\right)&=P_n(t)\chi_{[-1,1)}(t)=C^n_{-1}\Phi_n(t+1)+C^n_1\Phi_n(t)\nonumber\\&=C^n_{-1}P_n(2t+1)\chi_{[-1,0)}(t)+C^n_1P_n(2t-1)\chi_{[0,1)}(t).
\end{align}

\section{Construction of the functions}\label{se3}
We want to construct a set $\{h^n_0,\ldots,h^n_n\}$ of functions such that,
\begin{enumerate}[i.]
\item On $[-1,0)$ and  $[0,1)$, $h^n_i$ is a polynomial of degree at most  $n$,
\item $\int_{-1}^1 t^s h^n_i(t)dt=0$, $0\le s\le n,\ 0\le i\le n$,
\item $\int_{-1}^1 h^n_i(t) h^n_j(t)dt=2\delta_{i,j}, \quad 0\le i, j\le n$,
\item $h^n_i(-t)=(-1)^{i+1} h^n_i(t)$ for $t\in(0,1)$, and
\item $\int_0^1 t^s h^n_i(t) dt =0,\quad s< i$.
\end{enumerate}

We will show below that these functions have a hypergeometric
representation and use them in Theorem \ref{waveident} to construct
a wavelet basis for $\LR$.

\begin{theorem}\label{eqwave} 
For $0\le t<1$, we have
\begin{align}
&h_{n-i}^n(t)=\sum_{k=0}^n d^i_{n,k} t^k, \qquad \text{if $i=0$ or $i$ odd} \label{waveodd} \\
&h_{n-i}^n(t)=\sum_{k=0}^{n-1} d^{i-1}_{n-1,k} t^k,  \qquad \text{for $i>0$ even,}\label{waveeven}
\end{align}
where
\begin{align}\label{wavecoeff}
 d^i_{n,k}&=(-1)^n\frac{\sqrt{2n-2i+1}}{(-n)_i}\frac{(-n)_k(n-i+1)_k}{(1)_k(1)_k}\nonumber\\
&\times\prod_{m=0}^{\frac{i-1}{2}}(n+k+1-2m)\prod_{m=0}^{\frac{i-3}{2}}(n-k-1-2m),\
 0\le k\le n,\ i=0,\ \text{or}\ i\ \text{odd.}
\end{align}
We extend $h_{n-i}^n(t)$ on $[-1,0)$ using  {\rm (iv)}.
\end{theorem}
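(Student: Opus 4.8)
The plan is to verify that the piecewise polynomials defined by \eqref{waveodd}--\eqref{waveeven} satisfy the five characterizing conditions (i)--(v). Conditions (i) and (iv) are built into the statement: the degree bound is visible from the two formulas, and the values on $[-1,0)$ are imposed through (iv). So the real work is (ii), (iii), (v), and the first step is to use (iv) to fold every integral over $[-1,1]$ onto $[0,1)$. Substituting $t\mapsto -t$ on $[-1,0)$ gives
\[
\int_{-1}^1 t^s h^n_{n-i}(t)\,dt=\bigl[1+(-1)^{s+(n-i)+1}\bigr]\int_0^1 t^s h^n_{n-i}(t)\,dt,
\]
so the $[-1,1]$ moment in (ii) is automatically $0$ when $s+(n-i)$ is even and otherwise reduces to the vanishing of $\int_0^1 t^s h^n_{n-i}(t)\,dt$. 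Likewise
\[
\int_{-1}^1 h^n_{n-i}(t)h^n_{n-j}(t)\,dt=\bigl[1+(-1)^{i+j}\bigr]\int_0^1 h^n_{n-i}(t)h^n_{n-j}(t)\,dt,
\]
which vanishes automatically when $i\not\equiv j\pmod 2$; hence (iii) amounts to the orthonormality of the $h^n_{n-i}$ on $[0,1)$ within each parity class of $i$.

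Next I would exploit the recursive shape of the formula. For even $i>0$, equation \eqref{waveeven} says precisely that $h^n_{n-i}=h^{n-1}_{n-i}$, i.e.\ these functions are inherited verbatim from level $n-1$. I would check that such an inherited function automatically meets the level-$n$ requirements: its degree is $\le n-1\le n$, its symmetry type $(-1)^{(n-i)+1}$ is unchanged, the only new moment $s=n$ in (ii) is killed by the parity factor above (since $n+(n-i)+1$ is odd for even $i$), its orthogonality to level-$n$ functions of the opposite parity is automatic, and its orthogonality to $h^n_n$ follows from (v) because $\deg h^{n-1}_{n-i}\le n-1$. Thus, by induction on $n$, the theorem reduces to the cases $i=0$ or $i$ odd, i.e.\ to formula \eqref{waveodd}. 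The case $i=0$ is immediate: there $d^0_{n,k}=(-1)^n\sqrt{2n+1}\,\frac{(-n)_k(n+1)_k}{(1)_k(1)_k}$, so $h^n_n$ is the Legendre polynomial \eqref{hyperone} shifted to $[0,1)$ and normalized, and (ii), (iii), (v) follow from the classical orthogonality of Legendre polynomials.

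For odd $i$ I would verify (ii) and (v) by evaluating the moments in closed form. Writing $\mu_s=\int_0^1 t^s h^n_{n-i}(t)\,dt=\sum_{k=0}^n \frac{d^i_{n,k}}{s+k+1}$, using $\frac{1}{s+k+1}=\frac{1}{s+1}\frac{(s+1)_k}{(s+2)_k}$, and rewriting the two finite products in \eqref{wavecoeff} as Pochhammer factors in $k$,
\[
\prod_{m=0}^{\frac{i-1}{2}}(n+k+1-2m)=(-2)^{\frac{i+1}{2}}\Bigl(-\tfrac{n+k+1}{2}\Bigr)_{\frac{i+1}{2}},\qquad
\prod_{m=0}^{\frac{i-3}{2}}(n-k-1-2m)=(-2)^{\frac{i-1}{2}}\Bigl(-\tfrac{n-k-1}{2}\Bigr)_{\frac{i-1}{2}},
\]
each linear-in-$k$ factor becoming a ratio $(c+k)=c\,(c+1)_k/(c)_k$. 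After the evident parameter cancellations this reorganizes $\mu_s$ as a balanced hypergeometric series in $k$ evaluated at $1$ (and reorganizes $h^n_{n-i}$ itself into the ${}_{i+1}F_i$ form promised in the introduction). A Chu--Vandermonde/Pfaff--Saalsch\"utz summation then evaluates $\mu_s$ explicitly, and I would read off that it vanishes in exactly the required range ($s<n-i$, and $s+(n-i)$ odd with $s\le n$), while recording the nonzero values of $\mu_s$ for use in the next step.

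The main obstacle is the orthonormality (iii) among same-parity odd-$i$ functions,
\[
\int_0^1 h^n_{n-i}(t)h^n_{n-j}(t)\,dt=\sum_{k=0}^n\sum_{l=0}^n \frac{d^i_{n,k}d^j_{n,l}}{k+l+1}=\delta_{ij}.
\]
I would first contract one summation using the moment evaluations just obtained: for $j>i$ only the terms with $k\ge n-j$ and $k\equiv n-i\pmod 2$ survive in $\sum_k d^i_{n,k}\mu^{(j)}_k$, collapsing the double sum to a single balanced (well-poised) series. Evaluating this series to $\delta_{ij}$ is the technical heart; it is exactly the type of balanced ${}_4F_3$ identity at $1$ --- equivalently, an orthogonality relation for Wigner $6j$-symbols --- advertised in the abstract. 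The delicate points are the bookkeeping of the even/odd split of the two products and the normalizing constant $(-1)^n\sqrt{2n-2i+1}/(-n)_i$, whose sign and square-root factor are calibrated precisely so that the diagonal term of this ${}_4F_3$ evaluation returns $+1$.
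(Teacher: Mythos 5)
Your reductions faithfully reproduce the paper's: the parity fold to $[0,1)$ is exactly \eqref{redto01}, the inheritance $h^n_{n-i}=h^{n-1}_{n-1-(i-1)}$ for even $i>0$ (which you verify in more detail than the paper does), and the $i=0$ Legendre case all match. Your plan for (v) and the opposite-parity part of (ii) is also essentially the paper's: expanding the degree-$\le i+l$ polynomial weight in falling factorials and summing each piece by Chu--Vandermonde is precisely Lemma~\ref{sothlemma}, and the cancellation of $k+s+1$ against a factor of $\prod_{m}(n+k+1-2m)$ handles the opposite-parity moments. But your claim that ``a Chu--Vandermonde/Pfaff--Saalsch\"utz summation then evaluates $\mu_s$ explicitly'' is too quick for the \emph{nonzero} moments: after subtracting the value of the polynomial weight at $k=-(s+1)$ (so that the difference is divisible by $k+s+1$ and killed by Lemma~\ref{sothlemma}), the residual series is proportional to $\hypergeom32{-n,\ n-i+1,\ s+1}{1,\ s+2}{1}$, whose parameter excess is $i$, so it is \emph{not} Saalsch\"utzian for $i>0$ and no off-the-shelf ${}_3F_2$ summation applies; the paper supplies the missing ingredient as the Lagrange-interpolation/partial-fraction identity of Lemma~\ref{lang}. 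You only need this explicit value for the normalization $\int_0^1 h^n_{n-i}(t)^2\,dt=1$, but you do need it there, and your outline does not produce it.

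The genuine gap is in the off-diagonal case of (iii), and it comes from contracting the double sum in the unfavorable direction. For $i<j$ (both odd) you expand $h^n_{n-i}$ and pair with the moments $\mu^{(j)}_k$ of $h^n_{n-j}$; these vanish only for $k<n-j$ and for $k\not\equiv n-j \pmod 2$, so the survivors $n-j\le k\le n-i$, $k\equiv n-i\pmod 2$, form a sum of about $\frac{j-i}{2}+1$ generically nonzero terms, and you must then prove a nontrivial balanced-series vanishing identity. Deferring this to ``an orthogonality relation for Wigner $6j$-symbols'' is not a proof: the authors explicitly remark at the end of Section~\ref{se4} that they were unable to relate these orthogonality relations to the Racah/$6j$ theory, so the identity you invoke is exactly what is at stake, not a known result. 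The paper avoids the issue entirely by expanding the \emph{other} function: writing $\int_0^1 h^n_{n-j}h^n_{n-i}=\sum_{k\ge n-i} d^j_{n,k}\int_0^1 t^k h^n_{n-i}\,dt$, every surviving $k$ satisfies $k\ge n-i\ge n-j+2$, whence either $n-k$ is odd and $d^j_{n,k}=0$ by Lemma~\ref{fothlemma}, or $n-k$ is even and the moment vanishes by the parity argument of (ii) --- each term dies individually and no summation identity is needed. The fix for your argument is simply to swap which function you expand. Note also that on the diagonal your contraction already collapses to the single term $d^i_{n,n-i}\mu^{(i)}_{n-i}$ (Lemma~\ref{fothlemma} plus parity kill all $k>n-i$), so no ${}_4F_3$ evaluation is needed there either --- only the explicit moment value, i.e., the $x=n-i+1$ case \eqref{ee} of Lemma~\ref{lang}.
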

In formula \eqref{wavecoeff} and throughout the paper we use the convention that an empty product is equal to $1$.

\begin{rem}
We note that conditions (i) through (iv) are the same as those imposed by Alpert in his wavelet construction. To fix a basis he imposes more vanishing moments on $[-1,1)$ (see \cite[p.~248 condition 4]{alb}). We were not able to find a hypergeometric representation for his functions.
\end{rem}

\begin{rem}
From the above formula it follows that the polynomials $\frac{(-n)_i h^n_{n-i}(t)}{\sqrt{2n-2i+1}}$ for $i=0$ or $i$ odd  and  $\frac{(-n+1)_{i-1} h^n_{n-i}(t)}{\sqrt{2n-2i+1}}$ for $i>0$ even have integer coefficients.
\end{rem}

We prove this theorem after developing a few lemmas. We begin with,
\begin{lemma}\label{lang} For $i\le n$ 
\begin{equation}\label{lg1}
\sum_{k=0}^{n}\frac{(-n)_k}{k!}\binom{n-i+k}{n-i}\frac{1}{x+k}=\frac{n! (-x+1)_{n-i}}{(n-i)!(x)_{n+1}}.
\end{equation}
In particular, when $x=n-i+1$ we have
\begin{equation}\label{ee}
\sum_{k=0}^{n}\frac{(-n)_k}{k!}\binom{n-i+k}{k}\frac{1}{n-i+k+1}=(-1)^{n+i}\frac{n! (n-i)!}{(2n-i+1)!}.
\end{equation}
\end{lemma}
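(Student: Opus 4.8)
The plan is to prove the general identity \eqref{lg1} first, since the special case \eqref{ee} follows by substituting $x = n-i+1$ and simplifying the resulting Pochhammer symbols. The left-hand side of \eqref{lg1} is a terminating hypergeometric-type sum (it stops at $k=n$ because of the factor $(-n)_k$), so my first move would be to rewrite each ingredient in standard hypergeometric notation. Writing $\binom{n-i+k}{n-i} = \frac{(n-i+k)!}{(n-i)!\,k!} = \frac{(n-i+1)_k}{k!}$ and $\frac{1}{x+k} = \frac{1}{x}\frac{(x)_k}{(x+1)_k}$, the sum becomes $\frac{1}{x}\sum_{k=0}^n \frac{(-n)_k (n-i+1)_k (x)_k}{(1)_k (x+1)_k\, k!}$, which is a ${}_3F_2$ hypergeometric function evaluated at $1$:
\[
\frac{1}{x}\,\hypergeom32{-n,\ n-i+1,\ x}{1,\ x+1}{1}.
\]

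With the sum recognized as a ${}_3F_2(1)$, the natural tool is one of the classical summation theorems. Because the two denominator parameters are $1$ and $x+1$ while the numerator parameters are $-n$, $n-i+1$, $x$, I would check whether this fits the Pfaff--Saalsch\"utz theorem, which evaluates a balanced (Saalsch\"utzian) terminating ${}_3F_2(1)$ in closed form as a ratio of Pochhammer symbols. Balance requires that the sum of the denominator parameters exceed the sum of the numerator parameters by exactly one; here that condition reads $1 + (x+1) = (-n) + (n-i+1) + x + 1$, i.e. $x+2 = x - i + 2$, which holds only when $i = 0$. So Pfaff--Saalsch\"utz applies directly only in that degenerate case, and for general $i$ I expect to need a contiguous-relation or a more flexible evaluation. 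The cleaner route is probably to verify the claimed right-hand side $\frac{n!\,(-x+1)_{n-i}}{(n-i)!\,(x)_{n+1}}$ by induction on $n$ (or on $i$), using the three-term contiguous relations for ${}_3F_2$, or alternatively by a partial-fractions/telescoping argument in the variable $x$.

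My preferred concrete approach is the partial-fractions one, since the summand's only $x$-dependence sits in the single factor $\frac{1}{x+k}$. Both sides of \eqref{lg1} are rational functions of $x$ with simple poles exactly at $x = 0, -1, \dots, -n$ (on the right the factor $(x)_{n+1} = x(x+1)\cdots(x+n)$ supplies precisely these poles). It therefore suffices to check that the residues at each pole $x = -k$ agree and that both sides vanish appropriately as $x \to \infty$. The residue of the left-hand side at $x=-k$ is simply $\frac{(-n)_k}{k!}\binom{n-i+k}{n-i}$, and the residue of the right-hand side at $x=-k$ is $\frac{n!}{(n-i)!}\cdot\frac{(k+1)_{n-i}}{(-1)^k k!\,(n-k)!}$ after evaluating $(-x+1)_{n-i}$ at $x=-k$ and differentiating $(x)_{n+1}$; matching these two expressions reduces to an elementary Pochhammer identity. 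The main obstacle I anticipate is bookkeeping: correctly handling the signs and the telescoping of factorials when computing the residue of $\frac{1}{(x)_{n+1}}$ at each $x=-k$, and confirming that the degrees in $x$ match so that no polynomial part is missed. Once \eqref{lg1} is established, deducing \eqref{ee} is a routine substitution: setting $x = n-i+1$ gives $(-x+1)_{n-i} = (i-n)_{n-i} = (-1)^{n-i}\frac{(n-i)!}{0!}$ and $(x)_{n+1} = (n-i+1)_{n+1} = \frac{(2n-i+1)!}{(n-i)!}$, and collecting the factors yields the stated sign $(-1)^{n+i}$ and the ratio $\frac{n!\,(n-i)!}{(2n-i+1)!}$.
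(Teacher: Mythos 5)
Your proposal is correct and is essentially the paper's own argument in different clothing: the paper applies the Lagrange interpolation formula at the nodes $x_k=-k$, $k=0,\dots,n$, to $f(x)=\binom{n-i-x}{n-i}$, and dividing that formula by $(x)_{n+1}$ yields exactly the partial-fraction decomposition you verify by matching residues at $x=-k$ (which equal $\frac{(-1)^k f(-k)}{k!(n-k)!}$) and checking both sides vanish as $x\to\infty$. Your opening ${}_3F_2$/Pfaff--Saalsch\"utz discussion is a detour you correctly abandon, and your derivation of \eqref{ee} from \eqref{lg1} by substituting $x=n-i+1$ matches the paper's.
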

\begin{proof}
Consider a polynomial $f(x)$ of degree at most $n$. The Lagrange interpolation formula at the points $x_k=-k$, $k=0,1,\dots, n$ gives
\begin{equation*}
(x)_{n+1}\sum_{k=0}^{n}\frac{(-1)^k}{k!(n-k)!}\frac{f(-k)}{x+k}=f(x),
\end{equation*}
or equivalently
\begin{equation*}
\sum_{k=0}^{n}\frac{(-n)_k}{k!}\frac{f(-k)}{x+k}=\frac{n!f(x)}{(x)_{n+1}}.
\end{equation*}
Applying the above formula with $f(x)=\binom{n-i-x}{n-i}$ we find
\begin{equation*}
\sum_{k=0}^{n}\frac{(-n)_k}{k!}\binom{n-i+k}{n-i}\frac{1}{x+k}=\frac{n! (-x+1)_{n-i}}{(n-i)!(x)_{n+1}},
\end{equation*}
which gives the result.
\end{proof}
It is easy to see,
\begin{lemma}\label{fothlemma}
For $k\ge n-i+2$ and $n-k$ odd $ d^i_{n,k}=0$.
\end{lemma}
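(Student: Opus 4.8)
The plan is to pinpoint which factor in the product formula \eqref{wavecoeff} for $d^i_{n,k}$ is capable of vanishing under the stated hypotheses, and then to show that $k\ge n-i+2$ together with $n-k$ odd forces exactly that factor to be zero. First I would observe that, in the range $0\le k\le n$ with $i\le n$, every ingredient of \eqref{wavecoeff} except the last product is automatically nonzero: the Pochhammer symbol $(-n)_k$ has all factors in $\{-n,\dots,-1\}$; the symbol $(n-i+1)_k$ and the denominators $(1)_k$ consist of positive integers (here $n-i+1\ge 1$); and the first product $\prod_{m=0}^{(i-1)/2}(n+k+1-2m)$ has smallest term $n+k-i+2\ge n-i+2\ge 2>0$. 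Consequently any vanishing must originate from the second product $\prod_{m=0}^{(i-3)/2}(n-k-1-2m)$.

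Second, I would translate the hypotheses into a statement about $n-k$. Since $0\le k\le n$ we have $n-k\ge 0$, and the parity condition rules out $n-k=0$, so in fact $n-k\ge 1$; meanwhile $k\ge n-i+2$ gives $n-k\le i-2$. Thus $n-k$ is an odd integer with $1\le n-k\le i-2$, and I may write $n-k=2j+1$ for an integer $j$ with $0\le j\le \tfrac{i-3}{2}$.

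Third, I would substitute $m=j$ into the critical product: the corresponding factor is $n-k-1-2j=(2j+1)-1-2j=0$. Because $j$ lies in the admissible index range $0\le j\le \tfrac{i-3}{2}$, this zero factor genuinely appears in $\prod_{m=0}^{(i-3)/2}(n-k-1-2m)$, so the whole product—and hence $d^i_{n,k}$—vanishes, which is the claim.

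The only delicate point is the bookkeeping at the extreme values of $i$. For $i=0$ or $i=1$ the index $\tfrac{i-3}{2}$ is negative, so the critical product is empty; but in these cases the hypothesis $k\ge n-i+2\ge n+1$ cannot be met by any $k\le n$, so the statement holds vacuously and no inconsistency arises. I expect this parity-and-range accounting, rather than any genuine computation, to be the only thing that needs care.
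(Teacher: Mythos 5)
Your proof is correct, and it is precisely the verification the paper leaves implicit (the lemma is stated with only ``It is easy to see''): when $n-k$ is odd and $1\le n-k\le i-2$, the factor $n-k-1-2m$ of the second product vanishes at the admissible index $m=\frac{n-k-1}{2}\in\{0,\dots,\frac{i-3}{2}\}$. Your bookkeeping at $i=0,1$ (vacuous hypothesis) and the check that the remaining factors and denominators in \eqref{wavecoeff} are nonzero are both accurate, so nothing is missing.
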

Also
\begin{lemma}\label{sothlemma} If $i+l<n$ and $l\ge -1$ then for any  polynomial  $p(k)$ with deg $p\le i+l$,
\begin{equation}\label{fizero}
\sum_{k=0}^n\frac{(-n)_k(n-i+1)_k}{(1)_k(l+2)_k}p(k)=0.
\end{equation}
\end{lemma}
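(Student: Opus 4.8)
The plan is to recognize the left-hand side of \eqref{fizero} as an alternating binomial sum of a polynomial of degree strictly less than $n$, so that it is annihilated by the $n$-th finite difference. First I would rewrite the leading ratio using $\frac{(-n)_k}{(1)_k}=(-1)^k\binom{n}{k}$, valid for $0\le k\le n$ (and with both sides vanishing for $k>n$). Then \eqref{fizero} becomes $\sum_{k=0}^n(-1)^k\binom{n}{k}\,g(k)$ with $g(k)=\frac{(n-i+1)_k}{(l+2)_k}\,p(k)$.

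The key step is to show that $g$ is in fact a polynomial in $k$ of degree at most $n-1$. Writing the Pochhammer symbols as factorials gives $\frac{(n-i+1)_k}{(l+2)_k}=\frac{(l+1)!}{(n-i)!}\cdot\frac{(n-i+k)!}{(l+1+k)!}$, where I use $n-i\ge 0$ (which holds since the hypotheses force $i\le n$) and $l+1\ge 0$ (since $l\ge -1$). Because the hypothesis $i+l<n$ is exactly $n-i\ge l+1$, the last quotient equals the product $(l+2+k)(l+3+k)\cdots(n-i+k)$ of $d:=n-i-l-1\ge 0$ consecutive linear factors, a polynomial in $k$ of degree $d$. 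When $d=0$ this is the empty product $1$. Multiplying by $p$, which has degree at most $i+l$, yields $\deg g\le d+(i+l)=n-1$.

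Finally I would invoke the elementary identity $\sum_{k=0}^n(-1)^k\binom{n}{k}g(k)=(-1)^n(\Delta^n g)(0)$, together with the fact that the $n$-th forward difference $\Delta^n$ annihilates every polynomial of degree $<n$. Since $\deg g\le n-1$, the sum vanishes, which is precisely \eqref{fizero}.

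The only real subtlety, and the step I would be most careful about, is the passage from the rational-looking ratio $\frac{(n-i+1)_k}{(l+2)_k}$ to a genuine polynomial: this rests exactly on the inequality $i+l<n$ (equivalently $n-i\ge l+1$), which is why that hypothesis is needed, and on treating the boundary cases $d=0$ and $l=-1$ via the empty-product convention. As a cross-check one may instead expand $p$ in the basis $p_j(k)=(n-i+1+k)(n-i+2+k)\cdots(n-i+j+k)$ for $0\le j\le i+l$, use $(n-i+1)_k\,p_j(k)=(n-i+1)_j(n-i+1+j)_k$ to reduce each term to a terminating ${}_2F_1$ at $1$, and evaluate it by Chu--Vandermonde; the resulting numerator factor $(l+i+1-n-j)_n$ then contains a zero for every $j$ in the stated range, giving the same conclusion.
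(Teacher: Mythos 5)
Your proof is correct, and its main line is genuinely different from the paper's. The paper proves the lemma by expanding $p$ in the falling-factorial basis $p(k,r)=k(k-1)\cdots(k-r+1)$, cancelling against $(1)_k$, shifting the summation index $k=r+m$, and evaluating the resulting terminating ${}_2F_1$ at $1$ by Chu--Vandermonde, with the vanishing coming from the Pochhammer factor $(-n+i+l+1)_{n-r}$. You instead pull out $\frac{(-n)_k}{(1)_k}=(-1)^k\binom{n}{k}$ and show the remaining factor $g(k)=\frac{(n-i+1)_k}{(l+2)_k}\,p(k)$ is a \emph{polynomial} in $k$ of degree at most $(n-i-l-1)+(i+l)=n-1$, so the alternating sum is $(-1)^n(\Delta^n g)(0)=0$. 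Your verification of the key polynomiality step is complete: you correctly isolate the hypotheses $n-i\ge 0$, $l+1\ge0$, and $n-i\ge l+1$ (the last being exactly $i+l<n$) to write $\frac{(n-i+1)_k}{(l+2)_k}=\frac{(l+1)!}{(n-i)!}(l+2+k)\cdots(n-i+k)$, handling the empty-product cases. Your route is more elementary---it needs no hypergeometric summation theorem, only that $\Delta^n$ annihilates polynomials of degree below $n$---and it makes transparent precisely where the hypothesis $i+l<n$ is used; the paper's route stays inside the hypergeometric calculus that pervades the rest of the arguments, which is natural there since Chu--Vandermonde is reused elsewhere (e.g.\ in Lemma~\ref{lang} and Theorem~\ref{wcoeff}). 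Your cross-check via the basis $p_j(k)=(n-i+1+k)\cdots(n-i+j+k)$ and the identity $(n-i+1)_k\,p_j(k)=(n-i+1)_j(n-i+1+j)_k$ is also valid and is essentially a variant of the paper's proof in which the absorption happens in the numerator parameter $(n-i+1)_k$ rather than in $(1)_k$, avoiding the index shift; the resulting factor $(l+i+1-n-j)_n$ indeed contains a zero for all $0\le j\le i+l$.
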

\begin{proof} By linearity, it is enough to show \eqref{fizero} for the polynomials  $p(k,r)=k(k-1)\cdots(k-r+1)$ with  $0\le r\le i+l$. If the common factor in the numerator and denominator are canceled and the change of variable $k=r+m$ made, the above sum is equal to
\begin{align*}
\sum_{k=0}^n\frac{(-n)_k(n-i+1)_k}{(1)_k(l+2)_k}p(k,r)&=c_{n,r,i,l}\sum_{m=0}^{n-r}\frac{(-n+r)_m(n-i+r+1)_m}{(1)_m(r+l+2)_m}\\&=c_{n,r,i,l}\,\hypergeom21{-n+r,\ n-i+r+1}{r+l+2}{1}\\&=c_{n,r,i,l}\frac{(-n+i+l+1)_{n-r}}{(r+l+2)_{n-r}}=0,
\end{align*}
where $c_{n,r,i,l}=\frac{(-n)_{r}(n-i+1)_{r}}{(l+2)_{r}}$. The Chu-Vandermonde identity was used to obtain the last line in the above equation.
\end{proof}

\begin{proof}[Proof of Theorem~\ref{eqwave}]
Note first that (iv) implies that (iii) will automatically hold if $i$ and $j$ have opposite parity (i.e. one of them is odd and the other one is even). This shows that the constructions of the two sets of functions $\{h^{n}_{n-i}\}_{i\text{-odd}}$ and  $\{h^{n}_{n-i}\}_{i\text{-even}}$ are independent of each other. Moreover, we have 
\begin{equation}\label{redto01}
\int_{-1}^1 t^s h^n_{n-i}(t) dt=((-1)^{n-i+s+1}+1)\int_{0}^1 t^s h^n_{n-i}(t) dt.
\end{equation}
It is easy to see now that if the functions $\{h^{n}_{n-i}\}_{i\text{-odd}}$ defined by \eqref{waveodd} and (iv) satisfy (i), (ii), (iii) and (v), then the functions $\{h^{n}_{n-i}\}_{i>0\text{ even}}$ defined by \eqref{waveeven} and (iv) also satisfy (i), (ii), (iii) and (v) (because $h^{n}_{n-i}(t)=h^{n-1}_{n-1-(i-1)}(t)$ for $i>0$ even and $t\in(0,1)$).

If $i=0$, then \eqref{waveodd} and \eqref{wavecoeff} show that $h^n_n(t)=(-1)^{n}\sqrt{2}\hat{p}_n(1-2t)$ for $t\in[0,1)$, where $\hat{p}_n(t)$ is Legendre polynomial \eqref{hyperone} and the required properties of $h^n_n(t)$ follow easily from the properties of $\hat{p}_n(t)$.

It remains to show that for $i$ odd the functions defined by \eqref{waveodd} satisfy the required conditions, so we fix $i$ odd below. 

Formula \eqref{waveodd} yields
\begin{equation}\label{nn1}
\int_0^1 t^s h^n_{n-i}(t)dt =\sum_{k=0}^n d_{n,k}^i \frac{1}{k+s+1}=c_{n,i}\sum_{k=0}^n\frac{(-n)_k(n-i+1)_k}{(1)_k(s+2)_k}q(k), 
\end{equation}
where $c_{n,i}=(-1)^n\frac{\sqrt{2n-2i+1}}{(-n)_i(s+1)!}$ and 
$$
q(k)=\prod_{j=1}^s(k+j)\prod_{m=0}^{\frac{i-1}{2}}(n+k+1-2m)\prod_{m=0}^{\frac{i-3}{2}}(n-k-1-2m).
$$
For $s<n-i$ the degree of $q(k)$ is $s+i< n$ and we deduce from Lemma~\ref{sothlemma} that the right-hand side of \eqref{nn1} is equal to zero which gives (v).

For (ii), we need to show only that $\int_{0}^1 t^s h^n_{n-i}(t) dt=0$ when $s>n-i$ and $s$ and $n-i$ have opposite parity (the rest follows from \eqref{redto01} and (v)).  For such $s$ we find
\begin{equation}\label{intort}
\int_{0}^1 t^s h^n_{n-i}(t) dt=\sum_{k=0}^n d^i_{n,k} \frac{1}{k+s+1}.
\end{equation}
Note that $k+s+1$ cancels one of the factors in
$\prod_{m=0}^{\frac{i-1}{2}}(n+k+1-2m)$ so the right-hand side of \eqref{intort} is
equal to zero from Lemma~\ref{sothlemma} with $l=-1$. This completes the proof of (ii). 

Finally, we want to show that (iii) holds when $i$ and $j$ are odd. 
Suppose first that $i$ and $j$ are odd and $i< j$. Using (v) we see that
\begin{align*}
\int_0^1h^n_{n-j}(t)h^n_{n-i}(t)dt&=\sum_{k=n-i}^n
d_{n,k}^j\int_0^1 t^k h^n_{n-i}(t)dt\\&=\sum_{n-k\ \text{even}}
d_{n,k}^j\int_0^1 t^k h^n_{n-i}(t)dt+\sum_{n-k\ \text{odd}}
d_{n,k}^j\int_0^1 t^k h^n_{n-i}(t)dt.
\end{align*}
In the sums on the last line of the above equation $n-i\le k \le n$. Since in the
second sum $n-k$ is odd, Lemma~\ref{fothlemma} shows that $d_{n,k}^j=0$
while in the first sum $k>n-i$ and $k$ and $n-i$ are of opposite parity so
the   discussion after \eqref{intort} shows that the integral is
equal to zero. We now show that
$$
\int_0^1h^n_{n-i}(t)^2 dt=1.
$$
With the substitution of \eqref{wavecoeff} we find from above that
\begin{align*}
\int_0^1h^n_{n-i}(t)^2 dt&=d^i_{n,n-i}\int_0^1
t^{n-i}h^n_{n-i}(t)dt\\&=
\frac{(-1)^n\sqrt{2n-2i+1}d^i_{n,n-i}}{(-n)_i}\sum_{k=0}^n\frac{(-n)_k(n-i+1)_k}{(1)_k(1)_k}\frac{\pi_i(k)}{n-i+k+1},
\end{align*}
where $\pi(k)$ is the polynomial in $k$ of degree $i$ given by
$$
\pi_i(k)=\prod_{m=0}^{\frac{i-1}{2}}(n+k+1-2m)\prod_{m=0}^{\frac{i-3}{2}}(n-k-1-2m).
$$
If we add and subtract $\pi_i(-n+i-1)/(n-i+k+1))$ to the summand  then using
Lemma~\ref{sothlemma} with $s=-1$ and the fact that
$(\pi_i(k)-\pi_i(-n+i-1))/(n-i+k+1))$ is a polynomial of degree $i-1$ in
$k$ we find
$$
\int_0^1h^n_{n-i}(t)^2
dt=(-1)^{i+n}\frac{(2n-i+1)!}{n!(n-i)!}\sum_{k=0}^n\frac{(-n)_k(n-i+1)_k}{(1)_k(1)_k(n-i+k+1)}.
$$
The proof now follows from equation \eqref{ee} in Lemma~\ref{lang}.
\end{proof}

\begin{rem}\label{hypergeomrep}
From the explicit formulas given in Theorem~\ref{eqwave} it is clear that $h_{i}^n(t)$ are hypergeometric functions although some care is needed in the hypergeometric representation since some of the coefficients vanish (see Lemma~\ref{fothlemma}). If we set $\nep=n+\epsilon$, then for $i>0$ odd we can rewrite formula \eqref{waveodd} as
\begin{equation}\label{hypgeomodd}
\begin{split}
h_{n-i}^n(t)=&\frac{(-1)^n\sqrt{2n-2i+1}\,(n+1)}{(-n)_i}\prod_{m=0}^{\frac{i-3}{2}}(n-1-2m)^2\\
&\qquad\times\lim_{\epsilon\rightarrow0}{}_{i+2}F_{i+1}\left(\begin{matrix} -n,\; \alpha_1,\; \beta_1 \\
1, \;\alpha_0,\; \beta_0 \end{matrix}\,; t\right),
\end{split}
\end{equation}
where $\alpha_1=\{\nep-i+1,\nep-i+3,\dots,\nep+2\}$, $\alpha_0=\{\nep-i+2,\nep-i+4,\dots,\nep+1\}$, $\beta_1=\{-\nep+2,-\nep+4,\dots,-\nep+i-1\}$, $\beta_0=\{-\nep+1,-\nep+3,\dots,-\nep+i-2\}$. A similar formula can be written for $i$ even using \eqref{waveeven}.
\end{rem}

\begin{rem}\label{GHE}
From the theory of generalized hypergeometric functions we know that the function 
$$F=\hypergeom{i}{j}{a_1,\ \dots\ a_i}{b_1,\ \dots\ b_j}{t}$$
satisfies the differential equation
$$\left[D(D+b_1-1)\cdots(D+b_{j}-1)-t(D+a_1)\cdots(D+a_{i})\right]F=0$$
where $D=t\frac{d}{dt}$, see \cite[\S16.8(ii)]{nist}.
Thus if $i$ is odd, $t\in(0,1)$ we can use \eqref{hypgeomodd} to see that $h_{n-i}^n(t)$ satisfies the differential equation
\begin{equation}\label{diffeq}
\big[t(1-t)\frac{d^2}{dt^2}+(1-(i+2)t)\frac{d}{dt}+(n-i+1)(n+2)\big]\mathcal{L}_i h_{n-i}^n(t)=0,
\end{equation}
where 
\begin{equation}
\mathcal{L}_i=\prod_{m=0}^{\frac{i-3}{2}}(D-n+2m)\prod_{m=0}^{\frac{i-1}{2}}(D+n-i+1+2m).
\end{equation}
The use of equation~\eqref{wavecoeff} shows that
\begin{align*}
 \mathcal{L}_i h_{n-i}^n(t)&=(-1)^{n+\frac{i-1}{2}}(-n-1)_i\sqrt{2n-2i+1}
  \,\hypergeom21{-n+i-1,\ n+2}{1}{t}\\&=c_i \hat p^{0,i}_{n-i+1}(1-2t),
\end{align*}
where $\hat p^{0,i}_{n-i+1}(t)$ is the orthonormal Jacobi polynomial on $[-1,1]$. This implies that for fixed $i$, the functions $\mathcal{L}_i h_{n-i}^n(t)$ are orthogonal on $[0,1)$ for $n\ge i$.

Using \cite{lvw} and the specific form of the hypergeometric representation in \eqref{hypgeomodd}, it is possible to derive even lower-order differential equations for $h_{n-i}^n(t)$, but they also correspond to generalized eigenvalue equations.
\end{rem}

\section{Wavelets for the Alpert multiresolution}\label{se4}

As mentioned in section 2 the functions $\Phi_n$ generate a multiresolution analysis with $V_0={\rm cl}_{\LR}{\rm span}\{\phi_j(\cdot-i),\ i\in\Z,\ j=0\ldots,n\}$ where the function $\phi_j(\frac{t+1}{2})$ when restricted to $[-1,1)$ is the normalized Legendre polynomial $\hat p_j(t)$. In equation~\eqref{nest} the wavelet space $W_0$ is the orthogonal complement of $V_0$ in $V_1$. We look for a set $\Psi_n=[\psi^n_0,\cdots, \psi^n_n]^T$ of functions in $\LR$, each supported on $[0,1)$, whose integer translates provide a basis for $W_0$. Using properties (i)-(iv) of the functions $h^{n}_{i}(t)$ and a standard argument (see for instance \cite{alb}) we see that if we choose 
\begin{equation}\label{waveletprop}
\psi^n_j(t)=h^n_j(2t-1)\chi_{[0,1)}(t),
\end{equation}
then we obtain a basis for $\LR$ by dilations and translations.
\begin{theorem}\label{waveident}
The set of functions $\{ 2^{\frac{k}{2}} \psi^n_j(2^k\cdot-i),i,k\in\Z, j=0,\ldots,n \}$ forms an orthonormal, compactly supported, piecewise polynomial basis for $\LR$.  
\end{theorem}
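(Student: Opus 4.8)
The plan is to show that the wavelet functions $\psi^n_j$ defined by \eqref{waveletprop} inherit exactly the structural properties needed to apply the standard machinery for building an orthonormal wavelet basis from an orthogonal MRA. The statement to prove is Theorem~\ref{waveident}, which asserts that the system $\{2^{k/2}\psi^n_j(2^k\cdot-i)\}$ is an orthonormal, compactly supported, piecewise polynomial basis for $\LR$. The three properties in the conclusion split naturally: compact support and the piecewise-polynomial nature are immediate from \eqref{waveletprop} (since $h^n_j$ is supported on $(-1,1)$ and piecewise polynomial by property (i)), so the real content is orthonormality together with completeness.

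First I would verify orthonormality \emph{within a fixed scale and translation class}. Using \eqref{waveletprop} and the change of variables $u=2t-1$, the inner product $\ip{\psi^n_i,\psi^n_j}_{\LR}$ reduces to $\tfrac12\int_{-1}^1 h^n_i(u)h^n_j(u)\,du$, which equals $\delta_{i,j}$ by property (iii). This gives orthonormality of the generators and their integer translates (translates by distinct integers have disjoint supports, so their inner products vanish trivially). Orthonormality \emph{across scales} then follows from the nesting structure: I would show $W_0\perp V_0$ using property (ii), which says $\int_{-1}^1 t^s h^n_i(t)\,dt=0$ for $0\le s\le n$; since $V_0$ restricted to $[-1,1)$ consists of polynomials of degree at most $n$ (the Legendre polynomials $\hat p_0,\dots,\hat p_n$), property (ii) is precisely the orthogonality of each $h^n_i$ to that polynomial space, hence $\psi^n_j\perp V_0$. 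Because $W_j=2^{j/2}W_0(2^j\cdot)$ and the spaces $V_j$ are nested with $W_j\subset V_{j+1}\perp V_j\supset W_{j'}$ for $j'<j$, the orthogonality $W_j\perp W_{j'}$ for $j\ne j'$ propagates by scaling, giving full cross-scale orthonormality.

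Next I would establish that the translates of $\Psi_n$ span $W_0$, i.e.\ that we have the correct \emph{number} of wavelet functions and that they are linearly independent. A dimension count is the cleanest route: on the fundamental interval, $V_1$ restricted to $[-1,1)$ has dimension $2(n+1)$ (polynomials of degree $\le n$ on each of $(-1,0)$ and $(0,1)$), while $V_0$ contributes $n+1$; thus $W_0$ has $n+1$ generators per period, matching the $n+1$ functions $\psi^n_0,\dots,\psi^n_n$. Their linear independence follows from orthonormality. Combined with \eqref{nest}, this yields $V_{j+1}=V_j\oplus W_j$ as an \emph{orthogonal} direct sum.

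The final step—completeness in $\LR$—is where the main obstacle lies. I would use the standard MRA completeness criterion: $\LR=\overline{\bigcup_j V_j}$ and $\bigcap_j V_j=\{0\}$ imply $\LR=\bigoplus_{j\in\Z}W_j$, so the union over all scales and translates of the $W_j$-bases is complete. The density $\overline{\bigcup_j V_j}=\LR$ holds because $V_j^n=S^n_{-1}\cap L^2$ contains all dyadic step functions (already for $n=0$, the Haar case), whose closure is $\LR$; the trivial-intersection property follows from an energy/averaging estimate as the resolution coarsens. The delicate point is to justify these two limiting statements rigorously for the Alpert spaces rather than merely citing them, and to confirm that the orthogonal decomposition genuinely assembles into a basis of $\LR$ rather than just an orthonormal system. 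I expect this functional-analytic verification of density and trivial intersection to be the crux, whereas the algebraic orthogonality relations follow mechanically from properties (i)--(iv) already secured in Theorem~\ref{eqwave}.
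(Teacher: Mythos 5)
Your proposal is correct and is precisely the ``standard argument'' the paper itself invokes: the paper gives no detailed proof of Theorem~\ref{waveident}, stating only that properties (i)--(iv) of the $h^n_j$ together with the standard MRA machinery (citing Alpert \cite{alb}) yield the basis, and your outline --- same-scale orthonormality from (iii) and disjoint supports, $W_0\perp V_0$ from the moment conditions (ii), the per-interval dimension count $2(n+1)-(n+1)=n+1$ identifying the span of the $\psi^n_j$ with $W_0$, and completeness via $\overline{\bigcup_j V_j}=\LR$ and $\bigcap_j V_j=\{0\}$ --- fills in exactly that argument. No discrepancy with the paper's approach.
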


On the interval $[0,1)$ we have
\begin{theorem}\label{waveidentint}
The set of functions $\{ \sqrt{2}\phi_j,\ 2^{\frac{k}{2}} \psi^n_j(2^k\cdot-i),k\in\Z^+,\ i=0,\ldots, 2^k-1,\ j=0,\ldots,n \}$ forms an orthonormal, piecewise polynomial basis for $\Ltwo$.  
\end{theorem}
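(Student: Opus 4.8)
The plan is to view the listed functions as elements of $\LR$ (extended by $0$ off $[0,1)$), note that they form an orthonormal set inherited from Theorem~\ref{waveident}, and then establish completeness by running the multiresolution telescoping \eqref{nest} directly on the interval. Orthonormality needs almost no work: the $2^{k/2}\psi^n_j(2^k\cdot-i)$ are a subset of the orthonormal system of Theorem~\ref{waveident}; the $\sqrt2\,\phi_j$ are orthonormal because $\|\phi_j\|_{\LR}^2=\tfrac12$ and the $\hat p_j$ are orthogonal; and for every $k\ge0$ we have $\phi_j\in V_0^n\subseteq V_k^n$ while $V_k^n\perp W_k$, so the scaling functions are orthogonal to all the wavelets in the list. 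Since every function in the list is supported in $[0,1)$, its inner products agree in $\LR$ and in $\Ltwo$. The substance of the theorem is therefore \emph{completeness}.

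First I would pin down the support: $\psi^n_j(2^k\cdot-i)$ lives on $[i2^{-k},(i+1)2^{-k})$, so it is contained in $[0,1)$ precisely for $0\le i\le2^k-1$ — exactly the index range in the statement — while every other dyadic translate has support disjoint from $[0,1)$. The reason the count comes out exactly is that $0$ and $1$ are breakpoints of every spline space $V_k^n$, so no wavelet straddles an endpoint. Next I would introduce, for each $k\ge0$, the space $U_k\subset\Ltwo$ of functions that are polynomials of degree at most $n$ on each dyadic subinterval $[i2^{-k},(i+1)2^{-k})$; then $\dim U_k=2^k(n+1)$, $U_0=\spam\{\phi_j\}_{j=0}^n$, and extension by zero embeds $U_k$ into $V_k^n$.

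The heart of the argument is the interval refinement identity $U_{k+1}=U_k\oplus W_k'$, where $W_k'=\spam\{2^{k/2}\psi^n_j(2^k\cdot-i):0\le i\le2^k-1,\ 0\le j\le n\}$. I would verify $U_k\oplus W_k'\subseteq U_{k+1}$ by two checks: each interior wavelet lies in $U_{k+1}$, since $h^n_j$ has its only break at $0$ and hence $\psi^n_j(2^k\cdot-i)$ is a polynomial of degree at most $n$ on each of the two level-$(k+1)$ halves of $[i2^{-k},(i+1)2^{-k})$; and $W_k'\perp U_k$, since $U_k\subset V_k^n$ and $V_k^n\perp W_k$. Because $\dim W_k'=2^k(n+1)$ by orthonormality, the containment is forced to be an equality by the dimension count
\[
\dim(U_k\oplus W_k')=2^k(n+1)+2^k(n+1)=2^{k+1}(n+1)=\dim U_{k+1}.
\]
Telescoping from $U_0$ then yields $U_K=U_0\oplus\bigoplus_{k=0}^{K-1}W_k'$ for every $K\ge1$.

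Finally, $\bigcup_{K\ge0}U_K$ contains every dyadic step function and is therefore dense in $\Ltwo$, so the span of $\{\sqrt2\,\phi_j\}\cup\{2^{k/2}\psi^n_j(2^k\cdot-i)\}$ is dense; being orthonormal, this system is an orthonormal basis of $\Ltwo$. I expect the one genuinely delicate step to be the refinement identity $U_{k+1}=U_k\oplus W_k'$: after confirming that the interior wavelets land in $U_{k+1}$ and are orthogonal to $U_k$, it is the matching of dimensions — possible precisely because the endpoints of $[0,1)$ are dyadic breakpoints, so nothing straddles them — that guarantees no basis function is missing or redundant.
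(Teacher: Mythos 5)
Your argument is correct, and it is precisely the ``standard argument'' the paper itself invokes (via the reference to Alpert) rather than writing out: orthonormality inherited from the MRA structure, the refinement identity $U_{k+1}=U_k\oplus W_k'$ on dyadic subintervals established by containment, orthogonality of $W_k'\subset W_k$ to $U_k\subset V_k^n$, and the dimension count $2^k(n+1)+2^k(n+1)=2^{k+1}(n+1)$, followed by density of dyadic step functions in $\Ltwo$. Since the paper gives no explicit proof of this theorem, your write-up is a faithful and complete realization of the intended approach, including the key observation that $0$ and $1$ are breakpoints of every $V_k^n$ so no wavelet straddles the endpoints and the index range $i=0,\ldots,2^k-1$ captures exactly the wavelets supported in $[0,1)$.
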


From the theory of multiresolution analysis, there are $(n+1)\times(n+1)$ matrices $D^n_{-1}$ and $D^n_1$ 
such that the following equations
\begin{align}\label{wrefine}
\Psi_n\left(\frac{t+1}{2}\right)&=D^n_{-1}\Phi_n(t+1)+D^n_1\Phi_n(t)\nonumber\\&=D^n_{-1}P_n(2t+1)\chi_{[-1,0)}(t)+D^n_1P_n(2t-1)\chi_{[0,1)}(t)
\end{align} 
hold, where
$$
D^n_1=2\int_0^1\Psi_n\left(\frac{t+1}{2}\right) P_n(2t-1) dt,
$$
and
$$
D^n_{-1}=2\int_{-1}^0\Psi_n\left(\frac{t+1}{2}\right)P_n(2t+1) dt.
$$
Thus
$$
(D^n_1)_{i,j}=2\int_0^1\psi^n_i\left(\frac{t+1}{2}\right)\hat p_j(2t-1) dt=2\int_0^1 h^n_i(t)\hat p_j(2t-1) dt,\ 0\le i,j\le n .
$$
These formulas and property (v) show that $D^n_1$ is an upper triangular matrix and as we will see below the diagonal entries are positive. 
From the orthogonality properties of the above functions we find
\begin{equation}\label{waveref}
4I_n=D_{-1}^n(D_{-1}^n)^T+D_{1}^n(D_{1}^n)^T.
\end{equation}
We note that the right-hand side of the above equation differs by a factor of 2 from equation~(5) in \cite{gm}. This is due to the fact that we have normalized the components of $\Psi_n$ to be orthonormal. The symmetry properties of the wavelet functions give
\begin{equation}\label{symmd}
 (D^n_{-1})_{i,j}=(-1)^{i+j+1}(D^n_1)_{i,j},
\end{equation}
which combined with \eqref{waveref} leads to the orthogonality relations
\begin{equation}\label{delementorthogonal}
 0=((-1)^{i+k}+1)\sum_{j=i}^n(D^n_1)_{i,j}(D^n_1)_{k,j}, \ k<i,
\end{equation}
and
\begin{equation}\label{denormalization}
 2=\sum_{j=i}^n(D^n_1)_{i,j}(D^n_1)_{i,j}.
\end{equation}
Using the representation developed above for $\psi^n_j$ we will show,
\begin{theorem}\label{wcoeff} The nonzero entries in $D^n_1$ are given as follows:
\begin{equation}\label{dii}
 (D^n_1)_{n,n}=\sqrt{2},
\end{equation}
for $i$ odd and $j\le i$,
 \begin{align}\label{dd}
(D^n_1)_{n-i,n-j}&=c_{n,i,j}(-1)^{n+j}\sqrt{2(n-i)+1}\sqrt{2(n-j)+1}\nonumber\\
&\times\hypergeom43{-\frac{i-j}{2},\
\frac{j-i+1}{2},\ n-\frac{i+j-1}{2},\ n-\frac{i+j}{2}+1
}{n-\frac{i}{2}+\frac{3}{2},\ -\frac{i}{2},\ n-i+\frac{3}{2}}{1},
\end{align}
where
\begin{align*}
c_{n,i,j}&=2^{i/2}\,\frac{i!!(n-i+\frac{3}{2})_{\frac{i-1}{2}}(-n+j)_{n-i}(n-j+1)_{n-i}}{(2n-i+1)!},
\end{align*}
while for $i$ even and positive and $j\le i$,
\begin{equation}\label{oded}
(D^n_1)_{n-i,n-j}=(D^{n-1}_1)_{n-i,n-j}.
\end{equation}
\end{theorem}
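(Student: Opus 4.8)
The plan is to start from the defining formula $(D^n_1)_{n-i,n-j}=2\int_0^1 h^n_{n-i}(t)\,\hat p_{n-j}(2t-1)\,dt$ and to treat the three cases separately, since two of them reduce immediately. For the diagonal entry \eqref{dii} I would use the identity $h^n_n(t)=(-1)^n\sqrt2\,\hat p_n(1-2t)$ established in the proof of Theorem~\ref{eqwave} together with the parity $\hat p_n(1-2t)=(-1)^n\hat p_n(2t-1)$, which gives $h^n_n(t)=\sqrt2\,\hat p_n(2t-1)$ on $[0,1)$; then $(D^n_1)_{n,n}=2\sqrt2\int_0^1\hat p_n(2t-1)^2\,dt=\sqrt2$ after the substitution $u=2t-1$ and the normalization of $\hat p_n$. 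For the even case \eqref{oded} I would invoke $h^n_{n-i}(t)=h^{n-1}_{n-i}(t)$ on $(0,1)$ (valid for $i>0$ even, as noted after \eqref{redto01}): when $j\ge1$ this identifies the integrand with the one defining $(D^{n-1}_1)_{n-i,n-j}$, and when $j=0$ the factor $\hat p_n(2t-1)$ is orthogonal to the polynomial $h^{n-1}_{n-i}$ of degree $\le n-1$, so both sides vanish.

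The substance is the odd case \eqref{dd}, which I would attack by direct evaluation. First I would expand $h^n_{n-i}$ by \eqref{waveodd}--\eqref{wavecoeff} and integrate termwise against $\hat p_{n-j}(2t-1)$ using the shifted Legendre moment formula
\[
\int_0^1 t^k\,\hat p_m(2t-1)\,dt=\sqrt{\tfrac{2m+1}{2}}\,\frac{(k!)^2}{(k-m)!\,(k+m+1)!}\qquad(k\ge m),
\]
which vanishes for $k<m$ and follows from a standard moment evaluation (via the ${}_2F_1$ representation \eqref{hyperone} and a Beta integral). Writing $d^i_{n,k}=(-1)^n\frac{\sqrt{2n-2i+1}}{(-n)_i}\frac{(-n)_k(n-i+1)_k}{(k!)^2}\,\pi_i(k)$ with $\pi_i(k)=\prod_{m=0}^{(i-1)/2}(n+k+1-2m)\prod_{m=0}^{(i-3)/2}(n-k-1-2m)$, the $(k!)^2$ in the denominator of $d^i_{n,k}$ cancels the numerator of the moment formula, leaving the single finite sum
\[
(D^n_1)_{n-i,n-j}=2\sqrt{\tfrac{2(n-j)+1}{2}}\,(-1)^n\frac{\sqrt{2n-2i+1}}{(-n)_i}\sum_{k=n-j}^{n}\frac{(-n)_k(n-i+1)_k\,\pi_i(k)}{(k-n+j)!\,(k+n-j+1)!}.
\]

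The hard part will be converting this sum into the balanced ${}_4F_3$ of \eqref{dd}. My plan is to reindex by $k=n-p$ ($0\le p\le j$) and to rewrite the two step-two products in $\pi_i(n-p)$ as Pochhammer symbols in $p/2$, using $\prod_{m=0}^{M}(a-2m)=(-2)^{M+1}(-a/2)_{M+1}$; this is exactly what manufactures the half-integer parameters $-\tfrac i2$ and $n-\tfrac i2+\tfrac32$ (and, according to the parity of $j$, the half-integer among $n-\tfrac{i+j-1}{2},\,n-\tfrac{i+j}{2}+1$) in the target formula. After collecting the resulting ratios of Pochhammer symbols into hypergeometric form, the reindexed sum is a terminating series whose effective length is governed by the zeros of $\pi_i$ recorded in Lemma~\ref{fothlemma}; matching it to the right-hand side of \eqref{dd} --- in particular reconciling the two different summation ranges --- will require a transformation of terminating balanced series (reversal of the summation order, or a Whipple/Sheppard-type identity). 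I expect the choice of transformation and the bookkeeping of constants (assembling $c_{n,i,j}$ and the sign $(-1)^{n+j}$) to be the only genuinely delicate points; the Saalsch\"utzian character of the answer, checked by $\sum(\text{bottom})=\sum(\text{top})+1=2n-2i+3$, is a useful guide throughout.

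Finally I would record the two consequences asserted around the theorem. The terminating top parameter $-\tfrac{i-j}{2}$ (for $j$ odd) or $\tfrac{j-i+1}{2}$ (for $j$ even) forces the series to vanish outside $j\le i$, confirming upper-triangularity. Setting $j=i$ makes the top parameter $-\tfrac{i-j}{2}=0$, so the ${}_4F_3$ collapses to $1$ and \eqref{dd} reduces to $(D^n_1)_{n-i,n-i}=c_{n,i,i}(-1)^{n+i}(2(n-i)+1)$; since $(-n+i)_{n-i}=(-1)^{n-i}(n-i)!$, the sign $(-1)^{n+i}$ cancels and every remaining factor of $c_{n,i,i}$ is positive, giving positivity of the odd diagonal entries, while the even diagonal entries are positive by \eqref{oded} and the fact that $(D^{n-1}_1)_{n-i,n-i}$ is an odd entry at level $n-1$.
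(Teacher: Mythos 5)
Your treatment of \eqref{dii} and \eqref{oded} is correct and essentially the paper's (the paper disposes of these cases in one line, ``for $i>0$ even the formula follows from the properties of the wavelets''); your extra observation that for $i$ even and $j=0$ both sides vanish because $\hat p_n(2t-1)$ is orthogonal to the degree-$\le n-1$ polynomial $h^{n-1}_{n-i}$ correctly patches a point the paper leaves implicit.

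For the odd case your route is genuinely different. The paper expands $\hat p_{n-j}(2t-1)$ in powers of $t$ via \eqref{hyperone} and collapses the sum over the coefficient index $k$ in closed form, using Lemma~\ref{lang} together with Lemma~\ref{sothlemma} (adding and subtracting $\pi_i(-(s+1))$); what survives is a sum over the Legendre index $s\in[n-i,n-j]$, which after the even-$l$ parity observation and the duplication formula is \emph{verbatim} the ${}_4F_3$ of \eqref{dd} --- no series transformation is needed, and the computation is uniform in the parity of $j$. You instead collapse the other factor, via the Beta-evaluated moments $\int_0^1 t^k\hat p_m(2t-1)\,dt$ (your moment formula and the resulting single-sum expression for $(D^n_1)_{n-i,n-j}$ are both correct), leaving a sum over $k$. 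After your reindexing $k=n-p$ and duplication, this becomes (for $j$ even, $j<i$) the terminating balanced series
\begin{equation*}
\hypergeom43{-\frac{j}{2},\ \frac{1-j}{2},\ \frac{j-1}{2}-n,\ \frac{j}{2}-n}{1-\frac{i}{2},\ \frac{i+1}{2}-n,\ -n-\frac{1}{2}}{1},
\end{equation*}
with terminating parameter $-\frac{j}{2}$, not $-\frac{i-j}{2}$. That is exactly the family of representations in Lemma~\ref{altd} and \eqref{dwipone}: your plan in effect proves Lemma~\ref{altd} first and then must run its Whipple chain backwards to reach \eqref{dd}. What your route buys is a shorter sum (length $\sim j/2$) and a classical moment evaluation in place of the paper's two lemmas; what the paper's route buys is landing on the target directly, with a single formula for both parities of $j$.

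Two concrete gaps remain. First, the crux --- converting your $k$-sum into \eqref{dd} --- is asserted rather than executed, and the milder option you mention will not suffice: reversal of summation preserves the length of a terminating series, while your series has $\frac{j}{2}+1$ terms against the $\frac{i-j}{2}+1$ of \eqref{dd}, and your denominator parameters $1-\frac i2,\ \frac{i+1}{2}-n,\ -n-\frac12$ are disjoint from those of \eqref{dd}; you genuinely need (at least) two Whipple transformations, carried out separately for $j$ even and $j$ odd, i.e.\ the computation in the proof of Lemma~\ref{altd} in the opposite direction. Second, your parity bookkeeping via Lemma~\ref{fothlemma} fails at the diagonal: that lemma kills $d^i_{n,n-p}$ only for odd $p\le i-2$, and when $j=i$ the odd endpoint term $p=i$ survives ($\pi_i(n-i)\neq0$), so the even-$p$ reduction does not cover $j=i$ and the diagonal instance of \eqref{dd} --- on which your positivity argument relies --- needs a separate treatment (the paper's $s$-sum parity argument has no such exception, since its first product then passes through zero for every odd $l$). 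A minor further point: the vanishing for $j>i$ is not forced by a terminating numerator parameter (for $j>i$ the parameter $-\frac{i-j}{2}$ is positive and the series no longer terminates); it comes from the zero factor in the prefactor $(-n+j)_{n-i}$, or, most simply, from property (v), as the paper notes before the theorem. Your final check that $j=i$ gives a positive diagonal, using $(-n+i)_{n-i}=(-1)^{n-i}(n-i)!$, is correct as a consequence of \eqref{dd}.
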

\begin{rem}
Note that the ${}_4 F_3$ hypergeometric functions are balanced and satisfy the orthogonality equations \eqref{delementorthogonal}-\eqref{denormalization}. From the explicit formulas above we see that the diagonal entries are positive. As shown in \cite[Theorem~4]{gm} the orthogonality, upper triangularity, and positivity of the diagonal entries uniquely specify the matrix $D^n_1$.
\end{rem}

\begin{proof}
We prove the result for $i=0$ and $i$ odd since for $i>0$ even the formula follows from the properties of the wavelets. For $i=0$ or $i$ odd equation~\eqref{hyperone} in Theorem~\ref{eqwave} and condition (v) give,
\begin{equation}
(D^n_1)_{n-i,n-j}=(-1)^{n-j}\sqrt{2}\sqrt{2(n-j)+1}\sum_{s=n-i}^{n-j}\sum_{k=0}^n\frac{(-n+j)_s(n-j+1)_s}{(1)_s(1)_s}\frac{d^i_{n,k}}{k+s+1}.
\end{equation}
Lemma~\ref{lang} yields
$$
\sum_{k=0}^n\frac{(-n)_k(n-i+1)_k}{(1)_k(1)_k}\frac{1}{k+s+1}=\frac{n!(-s)_{n-i}}{(n-i)!(s+1)_{n+1}}=(-1)^{i}\frac{(-n)_{i}(-s)_{n-i}}{(s+1)_{n+1}}
$$
which coupled with Lemma~\ref{sothlemma} shows that
$$
\sum_{k=0}^n\frac{d^i_{n,k}}{k+s+1}=\frac{(-1)^{n+i}\sqrt{2n-2i+1}\,(-s)_{n-i}}{(s+1)_{n+1}}\prod_{m=0}^{\frac{i-1}{2}}(n-s-2m)\prod_{m=0}^{\frac{i-3}{2}}(n+s-2m).
$$
Thus
\begin{align*}
(D^n_1)_{n-i,n-j}&=(-1)^{n+j}\sqrt{2}\sqrt{2(n-i)+1}\sqrt{2(n-j)+1}\sum_{s=n-i}^{n-j}\frac{(-n+j)_s(n-j+1)_s}{(s+n+1)!(s-n+i)!}\\
&\times\prod_{m=0}^{\frac{i-1}{2}}(n-s-2m)\prod_{m=0}^{\frac{i-3}{2}}(n+s-2m),
\end{align*}
where the fact that
$\frac{(-s)_{n-i}}{(1)_s(1)_s(s+1)_{n+1}}=\frac{(-1)^{n-i}}{(s+n+1)!(s-n+i)!}$ has been used. Making the change of variable $s=n-i+l$ yields,
\begin{align*}
(D^n_1)_{n-i,n-j}&=(-1)^{n+j}\sqrt{2}\sqrt{2(n-i)+1}\sqrt{2(n-j)+1}\frac{(-n+j)_{n-i}(n-j+1)_{n-i}}{(2n-i+1)!}\\
&\times\sum_{l=0}^{i-j}\frac{(j-i)_l(2n-i-j+1)_l}{(2n-i+2)_l l!}\prod_{m=0}^{\frac{i-1}{2}}(i-l-2m)\prod_{m=0}^{\frac{i-3}{2}}(2n-i+l-2m).
\end{align*}
Note that the terms in the above sum are zero if $l$ is odd (here $i$ is odd) since the
first product starts from $i-l$ which is positive and even and ends up with 
$-l+1$ which is negative or zero. Using the identity
$$
(a)_{2l}=2^{2l}\left(\frac{a}{2}\right)_l\left(\frac{a+1}{2}\right)_l
$$
for $(j-i)_{2l}$, $(2l)!$, $(2n-i+2)_{2l}$ and $(2n-i-j+1)_{2l}$
together with the identities,
\begin{align*}
&\frac{\prod_{m=0}^{\frac{i-1}{2}}(i-2l-2m)}{(\frac{1}{2})_l}
=\frac{i!!}{(\frac{-i}{2})_l}
\end{align*}
and
\begin{align*}
&\frac{\prod_{m=0}^{\frac{i-3}{2}}(2n-i+2l-2m)}{(n-\frac{i}{2}+1)_l}=
2^{\frac{i-1}{2}}\frac{(n-i+\frac{3}{2})_{\frac{i-1}{2}}}{(n-i+\frac{3}{2})_l}
\end{align*}
give the result.

\end{proof}

Because of the way that the indices enter in the above formulas it is not so simple to obtain recurrence relations. For this reason we obtain another representation.
\begin{lemma}\label{altd} Suppose $j<i$. For $i$ odd and $j$ even,
 \begin{align}\label{dwioje}
(D^n_1)_{n-i,n-j}&=(-1)^{\frac{i-j+1}{2}}\frac{(j+1)!!(i-j-2)!!}{2^{i/2}(n-\frac{i+j-1}{2})_{\frac{i+1}{2}}}\sqrt{2(n-i)+1}\sqrt{2(n-j)+1}\nonumber\\
&\times\hypergeom43{-\frac{j}{2},\
\frac{j-i+1}{2},\ n+\frac{3-j}{2},\ -n+\frac{i+j}{2}
}{1,\ \frac{1}{2},\ \frac{3}{2}}{1},
\end{align}
for $i,\ j$ odd,
 \begin{align}\label{dwiojo}
&(D^n_1)_{n-i,n-j}=(-1)^{\frac{i-j}{2}+1}\frac{(j)!!(i-j-1)!!}{2^{\frac{i+2}{2}}(n-\frac{i+j-2}{2})_{\frac{i+1}{2}}}\sqrt{2(n-i)+1}\sqrt{2(n-j)+1}\nonumber\\
&\quad\times(2n-j+2)(j+1)\,\hypergeom43{-\frac{j-1}{2},\
\frac{j-i}{2}+1,\ n-\frac{j}{2}+2,\ -n+\frac{i+j+1}{2}
}{2,\ \frac{3}{2},\ \frac{3}{2}}{1},
\end{align}
while for the remaining coefficients use equation~\eqref{oded}.
\end{lemma}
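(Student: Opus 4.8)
The plan is to establish the two displayed formulas \eqref{dwioje} and \eqref{dwiojo}; the remaining entries with $i$ even are then handled by \eqref{oded}, which upon replacing $(n,i,j)$ by $(n-1,i-1,j-1)$ reduces them to the odd case. Thus it suffices to transform the representation of Theorem~\ref{wcoeff} into the claimed forms when $i$ is odd, treating the sub-cases $j$ even and $j$ odd separately. The two representations are not merely cosmetic rewritings: in \eqref{dd} the series terminates after roughly $(i-j)/2$ terms and the lower parameters $n-\frac{i}{2}+\frac32$, $-\frac{i}{2}$, $n-i+\frac32$ depend on $n$, whereas in \eqref{dwioje}--\eqref{dwiojo} it terminates after roughly $j/2$ terms and the lower parameters are the $n$-independent constants $1,\frac12,\frac32$ (resp. $2,\frac32,\frac32$). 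This change in both the length of the sum and the location of the $n$-dependence is exactly what a nontrivial transformation of a terminating Saalschützian series produces, and it is precisely what makes the three-term recurrences of Section~\ref{se6} feasible.

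First I would record that all three ${}_4F_3(1)$'s appearing in \eqref{dd}, \eqref{dwioje} and \eqref{dwiojo} are balanced: summing the numerator and denominator parameters shows the lower sum exceeds the upper by exactly $1$ in each case, and each series terminates (through $-\frac{i-j}{2}$ or $\frac{j-i+1}{2}$ in \eqref{dd}, and through $-\frac{j}{2}$ or $-\frac{j-1}{2}$ in the new forms). With termination and balancedness in hand, the natural tool is a Sears transformation for terminating balanced ${}_4F_3(1)$ series, applied to the ${}_4F_3$ of Theorem~\ref{wcoeff} in the variant that trades the terminating parameter for one governed by $j$ and sends the $n$-dependent lower parameters to constants. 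Equivalently --- and this is the route I would actually carry out --- I would return to the finite sum obtained in the proof of Theorem~\ref{wcoeff}, namely
$$\sum_{l=0}^{i-j}\frac{(j-i)_l(2n-i-j+1)_l}{(2n-i+2)_l\,l!}\prod_{m=0}^{\frac{i-1}{2}}(i-l-2m)\prod_{m=0}^{\frac{i-3}{2}}(2n-i+l-2m),$$
and regroup it the complementary way, splitting the two products off via the duplication formula $(a)_{2l}=2^{2l}(\tfrac a2)_l(\tfrac{a+1}2)_l$ so that the surviving Pochhammer symbols assemble into a series in $j$ rather than in $i-j$. The parity of $i-j$ (equivalently, of $j$) decides which numerator entry is the nonpositive integer forcing termination and whether the constant lower parameters emerge as the half-integers $\tfrac12,\tfrac32$ or as the integers $1,2$, which is exactly why the cases $j$ even and $j$ odd must be separated.

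The main obstacle is the prefactor bookkeeping. After the transformation one is left with a ratio of Pochhammer symbols that must be combined with the constant $c_{n,i,j}$ of Theorem~\ref{wcoeff} and simplified, through repeated use of the duplication formula and the conversions between Pochhammer symbols and double factorials, down to the compact expressions $(-1)^{(i-j+1)/2}(j+1)!!(i-j-2)!!\big/\big[2^{i/2}(n-\frac{i+j-1}{2})_{(i+1)/2}\big]$ in the $j$-even case and its $j$-odd analogue, the latter also carrying the extra factor $(2n-j+2)(j+1)$. Tracking the powers of $2$ and the sign $(-1)^{\bullet}$ through the half-integer Pochhammer identities, while keeping the symmetric factor $\sqrt{2(n-i)+1}\sqrt{2(n-j)+1}$ intact, is the delicate part; by contrast, verifying that the transformation legitimately applies (balancedness and termination) is routine, as indicated above.
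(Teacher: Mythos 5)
Your first-named tool --- a Sears/Whipple transformation of the terminating balanced ${}_4F_3(1)$ --- is exactly what the paper uses, but the route you explicitly commit to carrying out (``return to the finite sum \dots and regroup it the complementary way'' via the duplication formula) contains a genuine gap: that step would fail. Termwise rewriting --- duplication formula, Pochhammer/double-factorial conversions, reindexing or reversing the summation --- preserves the individual summands and hence the number of nonzero terms, and it is precisely such termwise bookkeeping that produced \eqref{dd} from the $l$-sum in the proof of Theorem~\ref{wcoeff} in the first place: both the $l$-sum (restricted to the even $l$, which are the only nonzero terms) and the series in \eqref{dd} have $\frac{i-j+1}{2}$ terms. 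The series in \eqref{dwioje}, by contrast, has $\min\bigl(\frac{j}{2},\frac{i-j-1}{2}\bigr)+1$ terms; already at $j=0$ its ${}_4F_3$ collapses to the single term $1$, while the $l$-sum still has $\frac{i+1}{2}$ nonzero summands, so the identity needed there is equivalent to a Saalsch\"utz-type summation theorem, not to prefactor bookkeeping. You yourself observe in your second paragraph that only a \emph{nontrivial transformation} of a terminating Saalsch\"utzian series can change the length of the sum and relocate the $n$-dependence; your subsequent ``equivalently'' contradicts this --- no regrouping of the $l$-sum is equivalent to that transformation, and the delicate part of the proof is not, as you claim, merely the prefactor simplification.

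For comparison, the paper's proof (written out for $i$ odd, $j$ even) applies the Whipple transformation for balanced terminating ${}_4F_3(1)$ series \emph{twice} to \eqref{dd}: first with terminating parameter $\frac{j-i+1}{2}$ (Whipple's ``$-n$'' with the transformation's $n$ equal to $\frac{i-j-1}{2}$, not the degree), $u=-\frac{i}{2}$ and $z=n-\frac{i+j}{2}+1$, producing the intermediate representation \eqref{dwipone} whose lower parameters $1$, $-\frac{i}{2}$, $-\frac{i}{2}+1$ are already free of $n$ but still depend on $i$; and then a second time with terminating parameter $-\frac{j}{2}$, $z=\frac{j-i+1}{2}$ and $u=1$, which finally yields the constant lower parameters $1,\frac12,\frac32$. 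So even on your fallback route, a single standard Sears application of the variant you sketch does not land on \eqref{dwioje}: a composition of two transformations with carefully matched parameter slots is required, and the $j$ odd case needs its own parameter choices (producing the lower parameters $2,\frac32,\frac32$ and the extra factor $(2n-j+2)(j+1)$ in \eqref{dwiojo}). The sound parts of your proposal are the peripheral ones: the reduction of the $i$ even entries via \eqref{oded} with $(n,i,j)\to(n-1,i-1,j-1)$, and the balancedness and termination checks that legitimize applying such transformations at all.
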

\begin{proof}
We only consider the case when $i$ is odd and $j$ is even. 
The Whipple transformation of a balanced ${}_4F_3$ hypergeometric function is the following
\begin{align*}
&\hypergeom43{-n,\ x,\ y,\ z}{u,\ v,\ w}{1}\\&=\frac{(1-v+z-n)_n(1-w+z-n)_n}{(v)_n(w)_n}\hypergeom43{-n,\
  u-x,\ u-y,\ z}{u,\ 1-v+z-n,\ 1-w+z-n}{1}.
\end{align*}
Thus with $i$ odd, $j$ even, $n=\frac{i-j-1}{2}$, $u=-i/2$, and
$z= n-\frac{i+j}{2}+1$, 
we find
\begin{align*}
&\hypergeom43{-\frac{i-j}{2},\
\frac{j-i+1}{2},\ n-\frac{i+j-1}{2},\ n-\frac{i+j}{2}+1
}{n-\frac{i}{2}+\frac{3}{2},\ -\frac{i}{2},\ n-i+\frac{3}{2}}{1}\\&=\frac{(1)_{\frac{i-j-1}{2}}(-\frac{i}{2}+1)_{\frac{i-j-1}{2}}}{(n-\frac{i-3}{2})_{\frac{i-j-1}{2}}(n-i+\frac{3}{2})_{\frac{i-j-1}{2}}}\hypergeom43{-\frac{j}{2},\
\frac{j-i+1}{2},\ -n+\frac{j-1}{2},\ n-\frac{i+j}{2}+1
}{1,\ -\frac{i}{2},\ -\frac{i}{2}+1}{1}.
\end{align*}
An application of this formula to equation~\eqref{dd} and the identity 
$$
\left(n-\frac{i+j-1}{2}\right)_{\frac{j+2}{2}}\left(n-\frac{i-3}{2}\right)_{\frac{i-j-1}{2}}=\left(n-\frac{i+j-1}{2}\right)_{\frac{i+1}{2}},
$$ 
yields
\begin{align}\label{dwipone}
(D^n_1)_{n-i,n-j}&=(-1)^{j-i}\sqrt{2}\frac{2^{\frac{i-3-2j}{2}}i!!(1)_{\frac{i-j-1}{2}}(-\frac{i}{2}+1)_{\frac{i-j-1}{2}}}{(n-\frac{i+j-1}{2})_{\frac{i+1}{2}}(i-j)!}\sqrt{2(n-i)+1}\sqrt{2(n-j)+1}\nonumber\\
&\times\hypergeom43{-\frac{j}{2},\
\frac{j-i+1}{2},\ -n+\frac{j-1}{2},\ n-\frac{i+j}{2}+1
}{1,\ -\frac{i}{2},\ -\frac{i}{2}+1}{1}.
\end{align}
Another use of Whipple's transformation with $n= \frac{j}{2}$, $z=\frac{j-i+1}{2}$, and $u=1$ gives the result.
\end{proof}

\begin{rem}
Besides the orthogonality relations \eqref{delementorthogonal}-\eqref{denormalization}, equations~\eqref{nest} and \eqref{refsym} imply the relations,
\begin{equation}\label{mc1ortho}
2I_{n+1}=C^n_{-1}{C^n_{-1}}^T+C^n_1{C^n_1}^T,
\end{equation}
and
\begin{equation}\label{cdortho}
 C^n_{-1} {D^n_{-1}}^T + C^n_1 {D^n_{1}}^T=0.
\end{equation}
Using the symmetry property \eqref{symmd} and equation~(15) of \cite{gm} we see that that the matrix $A$ composed of the even rows of $C^n_1$ and the odd rows of $\frac{D^n_1}{\sqrt{2}}$ or vice versa is unitary which yields orthogonality relations among the entries of $C^n_1$, and $\frac{D^n_1}{\sqrt{2}}$. 
It is interesting to compare these orthogonality relations with other known orthogonality relations for ${}_4F_3$ series, and in particular with the orthogonality of the Wigner $6j$-Symbols and Racah polynomials, see \cite{wil} or the book \cite{nsu}. However, we could not relate the orthogonality relations above to this theory. Providing a Lie-theoretic interpretation of these new orthogonality equations is a very interesting problem. Another challenging problem is to connect the orthogonality relations here to an appropriate extension of the Fields and Wimp expansion formula \cite{fw}.
\end{rem}

\section{Fourier Transform}\label{se5}
An important tool in wavelet theory is the Fourier transform given by,
\begin{equation}\label{wft}
\hat\psi^n_k(\theta)=\int_{0}^1\psi^n_k(t)e^{-i\theta t} dt.
\end{equation}
In order to compute this Fourier transform we will focus on 
\begin{equation}\label{wfh}
\hat h^n_k(\theta)=\int_{-1}^1 h^n_k(t)e^{-i\theta t} dt,
\end{equation}
since equation \eqref{waveletprop} implies $\hat{\psi}^{n}_{k}(\theta)=\frac{e^{-i\theta/2}}{2}\hat{h}^n_k(\theta/2)$.
\begin{theorem}\label{wavetrans}
For $j$ odd $\hat h^n_k(\theta)$ is given by
\begin{align}\label{transeqodd}
 \hat h^n_{n-j}(\theta)=&\frac{(-1)^{\frac{j+1}{2}}2^{j+1}(\frac{j+1}{2})!(n+1)!(n+2)!}{(j+2)!(2n+3)!(n+\frac{3}{2}-\frac{j}{2})!}\sqrt{2n-2j+1}(-i\theta)^{n+2}\nonumber\\&
\qquad\times\hypergeom23{\frac{n+3}{2},\ \frac{n+4}{2}}{\frac{j+4}{2},\ n+\frac{5}{2},\ n+\frac{5-j}{2}}{-\frac{\theta^2}{4}},
\end{align}
while for $j$ even, $\hat h^n_{n-j}(\theta)=\hat h^{n-1}_{n-j}(\theta)$.
\end{theorem}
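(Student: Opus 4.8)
The plan is to realize $\hat h^n_{n-j}(\theta)$ as a moment generating function and then recognize the resulting power series as the stated ${}_2F_3$. For $j$ even the asserted identity $\hat h^n_{n-j}(\theta)=\hat h^{n-1}_{n-j}(\theta)$ is essentially free: by \eqref{waveeven} the function $h^n_{n-j}$ agrees on $(0,1)$ with $h^{n-1}_{n-j}$, and since both carry the subscript $n-j$ they are extended to $(-1,0)$ by the \emph{same} symmetry rule (iv); hence they coincide on $(-1,1)$ and have equal Fourier transforms. So I fix $j$ odd. Expanding $e^{-i\theta t}=\sum_{m\ge0}(-i\theta)^m t^m/m!$ and integrating term by term (legitimate since $h^n_{n-j}$ is a bounded, compactly supported piecewise polynomial) gives
\begin{equation*}
\hat h^n_{n-j}(\theta)=\sum_{m\ge0}\frac{(-i\theta)^m}{m!}\,\mu_m,\qquad \mu_m:=\int_{-1}^1 t^m h^n_{n-j}(t)\,dt.
\end{equation*}

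Next I would determine which $\mu_m$ survive. By \eqref{redto01}, $\mu_m=((-1)^{n-j+m+1}+1)\int_0^1 t^m h^n_{n-j}(t)\,dt$, so $\mu_m=0$ unless $m\equiv n\pmod 2$ (recall $j$ is odd). Property (ii) annihilates $\mu_m$ for $0\le m\le n$, and the parity factor additionally kills $m=n+1$; thus the first nonzero moment is $\mu_{n+2}$ and the surviving indices are exactly $m=n+2+2p$, $p\ge0$. This already forces the prefactor $(-i\theta)^{n+2}$ and shows that only even powers of $\theta$ remain, matching the argument $-\theta^2/4$ of the claimed ${}_2F_3$.

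To evaluate the surviving moments I would reuse the closed form produced inside the proof of Theorem~\ref{wcoeff} (from Lemma~\ref{lang} and Lemma~\ref{sothlemma}),
\begin{equation*}
\int_0^1 t^m h^n_{n-j}(t)\,dt=\frac{(-1)^{n+j}\sqrt{2n-2j+1}\,(-m)_{n-j}}{(m+1)_{n+1}}\prod_{l=0}^{\frac{j-1}{2}}(n-m-2l)\prod_{l=0}^{\frac{j-3}{2}}(n+m-2l).
\end{equation*}
Writing $T_m=\frac{(-i\theta)^m}{m!}\mu_m$, the substitution $m=n+2$ collapses the Pochhammer symbols and the two finite products into factorials and reproduces the explicit constant multiplying the ${}_2F_3$ in \eqref{transeqodd}; this is the cleanest way to fix the normalization, i.e.\ the $p=0$ term.

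Finally I would match the series by the ratio test for hypergeometric terms. With $m=n+2+2p$, each factor of $T_{m+2}/T_m$ telescopes separately: the $m!$ supplies $1/((m+1)(m+2))$, the ratios of $(-m)_{n-j}$ and $(m+1)_{n+1}$ each supply a factor $(m+1)(m+2)$ upstairs, and the two finite products contribute one linear factor apiece. After the visible cancellations and regrouping each remaining linear factor as $2(\,\cdot+p\,)$, the ratio becomes
\begin{equation*}
\frac{T_{m+2}}{T_m}=-\frac{\theta^2}{4}\,\frac{\left(\frac{n+3}{2}+p\right)\left(\frac{n+4}{2}+p\right)}{\left(\frac{j+4}{2}+p\right)\left(n+\frac52+p\right)\left(n+\frac{5-j}{2}+p\right)(p+1)},
\end{equation*}
which is exactly the term ratio of $\hypergeom23{\frac{n+3}{2},\ \frac{n+4}{2}}{\frac{j+4}{2},\ n+\frac52,\ n+\frac{5-j}{2}}{-\frac{\theta^2}{4}}$; together with the $p=0$ normalization this yields \eqref{transeqodd}. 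The main obstacle is purely bookkeeping: carefully telescoping the two $m$-dependent products $\prod_l(n-m-2l)$ and $\prod_l(n+m-2l)$ under the shift $m\mapsto m+2$ and checking that the powers of $2$ and the resulting half-integer shifts assemble into precisely these five parameters. Everything else (termwise integration and the vanishing of the low moments) is routine.
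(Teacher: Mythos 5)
Your treatment of $j$ odd is, in substance, the paper's own proof: expand $e^{-i\theta t}$ and integrate termwise, kill the moments with $0\le m\le n$ by (ii) and the wrong-parity ones by \eqref{redto01}, evaluate the surviving moments by the closed form coming from Lemma~\ref{lang} (at $x=s+1$) and Lemma~\ref{sothlemma} (with $l=-1$) --- the paper records exactly this as \eqref{firstint}, which is your displayed moment formula after the rewriting $\tfrac{(-1)^{n+j}(-m)_{n-j}}{m!}=\tfrac{1}{(m-n+j)!}$ --- and then reindex $m=n+2+2r$. Reusing that moment formula outside the range $n-i\le s\le n-j$ appearing in Theorem~\ref{wcoeff} is legitimate, since both lemmas apply for every integer $s\ge 0$ once $j\le n$. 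The only real difference is presentational at the last step: the paper converts each term into explicit hypergeometric form via duplication-type identities such as $(a)_{2l}=2^{2l}(\tfrac a2)_l(\tfrac{a+1}{2})_l$, whereas you fix the $p=0$ term and verify the term ratio. I checked both: your constant at $p=0$ and your displayed ratio $T_{m+2}/T_m$ are correct (your prose bookkeeping is slightly off --- the ratios of $(-m)_{n-j}$ and $(m+1)_{n+1}$ also contribute the downstairs factors $(2p+j+3)(2p+j+4)$ and $(2p+2n+4)(2p+2n+5)$, not only upstairs factors --- but the displayed ratio is what the argument uses). Your reduction of $j$ even and positive to the identity $h^n_{n-j}=h^{n-1}_{n-j}$ on $(-1,1)$ via \eqref{waveeven} and the parity rule (iv) is also exactly the paper's.

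The one genuine gap is the case $j=0$, which is even and therefore falls under the theorem's clause $\hat h^n_{n-j}=\hat h^{n-1}_{n-j}$, but is covered by neither branch of your argument: \eqref{waveeven} is stated only for $i>0$ even, $h^{n-1}_n$ is not one of the constructed functions, and your odd-$j$ computation excludes $j=0$ as well --- indeed for $j=0$ the parity condition becomes $s\equiv n+1\pmod 2$, so the surviving moments are $s=n+1+2r$ and the series begins with $(-i\theta)^{n+1}$ rather than $(-i\theta)^{n+2}$. The paper handles this separately: it derives \eqref{firstint} ``for $j=0$ or $j$ odd'' and closes the $j=0$ case by the substitution $s=n+1+2r$, which identifies the resulting series for $\hat h^n_n$ with the right-hand side of \eqref{transeqodd} read with $n\mapsto n-1$, $j\mapsto -1$, i.e.\ $\hat h^n_n=\hat h^{n-1}_n$. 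The fix inside your framework is short: at $j=0$ both finite products in your moment formula are empty, the first term is $2\sqrt{2n+1}\,\tfrac{(n+1)!}{(2n+2)!}(-i\theta)^{n+1}=\sqrt{2n+1}\,\tfrac{n!}{(2n+1)!}(-i\theta)^{n+1}$, and the term ratio matches that of $\hypergeom23{\frac{n+2}{2},\ \frac{n+3}{2}}{\frac{3}{2},\ n+\frac{3}{2},\ n+2}{-\frac{\theta^2}{4}}$, which is \eqref{transeqodd} at the shifted parameters; you should add this verification to make the proof complete.
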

\begin{rem}
It is remarkable that while the wavelet functions are limits of higher-order hypergeometric functions, their Fourier transforms have a simple closed formula in terms of ${}_2F_3$ series. Since $h^n_k(t)$ is a polynomial of degree $n$ on $[0,1)$, Euler's formula and integration by parts show that $\hat h^n_{n-j}(\theta)$ can be written in terms of $\sin{\theta}$ and $\cos{\theta}$ multiplied by polynomials in $1/\theta$ of degree $n+1$. This is also true of the wavelets constructed by Alpert et al.
\end{rem}
\begin{proof}
We prove the formula for $j=0$ or $j$ odd since for $j$ even and positive the result follows from equation~\eqref{waveeven}. From the symmetry properties of $h^n_{n-j}$ we find
$$
\int_{-1}^1 h^n_{n-j}(t)f(t)dt=\int_{0}^1[(-1)^{n-j+1}f(-t)+f(t)] h^n_{n-j}(t)dt,
$$
so that
\begin{equation}\label{wftsy}
\hat h^n_{n-j}(\theta)=\int_{0}^1[(-1)^{n-j+1}e^{i\theta t}+e^{-i\theta t}] h^n_{n-j}(t)dt.
\end{equation}
From  property (ii),
$
\int_{-1}^1 h^n_k(t) t^s dt=0,
$ for $0\le s\le n$. Thus only the moments for $s>n$ and $s\equiv n-j+1\mod2$ are nonzero. For such $s$ we find
\begin{align*}
&\int_0^1 h^n_{n-j}(t) t^s dt=\sum_{k=0}^n d^j_{n,k}\frac{1}{k+s+1}\\&=(-1)^n\frac{\sqrt{2n-2j+1}}{(-n)_j}\sum_{k=0}^n\frac{(-n)_k(n-j+1)_k}{(k!)^2}
\frac{\prod_{m=0}^{\frac{j-1}{2}}(n+k+1-2m)\prod_{m=0}^{\frac{i-3}{2}}(n-k-1-2m)}{k+s+1}.
\end{align*}
By Lemma~\ref{sothlemma},  $k$ can be replaced by $-(s+1)$ in the above products
which gives
\begin{align*}
\int_0^1 h^n_{n-j}(t) t^s dt&=(-1)^n\frac{\sqrt{2n-2j+1}}{(-n)_j}\prod_{m=0}^{\frac{j-1}{2}}(n-s-2m)\prod_{m=0}^{\frac{j-3}{2}}(n+s-2m)\\&\times\sum_{k=0}^n\frac{(-n)_k(n-j+1)_k}{(k!)^2}\frac{1}{k+s+1}.
\end{align*}
The use of Lemma~\ref{lang} with $x=s+1$ shows that the above sum is equal to $\frac{(-n)_j(-1)^j(-s)_{n-j}}{(s+1)_{n+1}}$ so that
\begin{equation*}
\int_0^1 h^n_{n-j}(t) t^s dt=(-1)^{n+j}\frac{\sqrt{2n-2j+1}(-s)_{n-j}}{(s+1)_{n+1}}\prod_{m=0}^{\frac{j-1}{2}}(n-s-2m)\prod_{m=0}^{\frac{j-3}{2}}(n+s-2m).
\end{equation*}
The identity $\frac{(-s)_{n-j}(-1)^{n+j}}{s!}=\frac{1}{(s-n+j)!}$ allows the above integral to be rewritten as
\begin{align}\label{firstint}
\int_0^1 h^n_{n-j}(t)\frac{t^s}{s!} dt=\frac{\sqrt{2n-2j+1}}{(s-n+j)!(s+1)_{n+1}}\prod_{m=0}^{\frac{j-1}{2}}(n-s-2m)\prod_{m=0}^{\frac{j-3}{2}}(n+s-2m).
\end{align} 
For $j>0$  the change of variables $s=n+2+2r,\ r=0,1,\ldots$ yields
\begin{align*}
&\int_{-1}^1 h^n_{n-j}(t)\frac{t^{n+2+2r}}{(n+2+2r)!} dt\\
&=\frac{2^{j+1}(-1)^{\frac{j+1}{2}}\sqrt{2n-2j+1}}{(2+2r+j)!(n+3+2r)_{n+1}}\prod_{m=0}^{\frac{j-1}{2}}(m+r+1)\prod_{m=0}^{\frac{j-3}{2}}(n+1+r-m).
\end{align*}
This can be simplified using $\frac{1}{(2+j+2r
)!}=\frac{1}{(2+j)!(3+j)_{2r}}=\frac{1}{(2+j)!(\frac{3+j}{2})_r(\frac{4+j}{2})_r 2^{2r}}$,
\begin{align*}
\frac{1}{(n+3+2r)_{n+1}}
=\frac{(n+2)!}{(2n+3)!}\frac{(\frac{n+3}{2})_r(\frac{n+4}{2})_{r}}{(n+2)_r(n+\frac{5}{2})_r},
\end{align*}
$$
\prod_{m=0}^{\frac{j-1}{2}}(m+r+1)=\frac{(\frac{j+1}{2}+r)!}{r!}=\frac{(\frac{j+1}{2})!(\frac{j+3}{2})_r}{r!},
$$
and
$$
\prod_{m=0}^{\frac{j-3}{2}}(n+r+1-m)=\frac{(n+1+r)!}{(n+\frac{3}{2}-\frac{j}{2}+r)!}=\frac{(n+1)!(n+2)_r}{(n+\frac{3}{2}-\frac{j}{2})!(n+\frac{5}{2}-\frac{j}{2})_r},
$$
to obtain
\begin{align}\label{eqtrans}
&\int_{-1}^1 h^n_{n-j}(t)\frac{t^{n+2+2r}}{(n+2+2r)!} dt\\\nonumber
\qquad&=\frac{(-1)^{\frac{j+1}{2}}2^{j+1}(\frac{j+1}{2})!(n+1)!(n+2)!}{(j+2)!(2n+3)!(n+\frac{3}{2}-\frac{j}{2})!}\sqrt{2n-2j+1}\frac{(\frac{n+3}{2})_r(\frac{n+4}{2})_{r}}{r!(\frac{4+j}{2})_r(n+\frac{5}{2})_r(n+\frac{5}{2}-\frac{j}{2})_r}\frac{1}{4^r}.
\end{align}
Thus
\begin{align*}
\hat h^n_{n-j}(\theta)&=\sum_{s=0}^{\infty}\int_{-1}^1 h^n_{n-j}(t)(-i\theta)^s \frac{t^s}{s!} dt\\&=(-i\theta)^{n+2}\sum_{r=0}^{\infty}(-i\theta)^{2r}\int_{-1}^1 h^n_{n-j}(t)\frac{t^{n+2+2r}}{(n+2+2r)!} dt
\end{align*}
and the result for $j$ odd is obtained by the substitution of \eqref{eqtrans} in the above formula. For $j=0$ the substitution $s=n+1+2r$ in equation~\eqref{firstint} shows that $\hat h^n_n=\hat h^{n-1}_n$ which completes the proof.
\end{proof}

\begin{rem}
From the differential equation given in Remark~\ref{GHE} and 
\eqref{transeqodd} we obtain differential equations for
$\hat{h}^n_{n-j}$. With
$D_{\theta}:=\frac{\theta}{2}\frac{d}{d\theta}=\theta^2
\frac{d}{d\theta^2}$ and since $D_{\theta}\theta^n f(\theta)=\frac{n}{2} \theta^n f(\theta)+\theta^n D_{\theta}f(\theta)$ we find
for $j$ odd
\begin{align}\label{psihatdfeqod}
&\bigg[\left(D_{\theta}-\frac{n+2}{2}\right)\left(D_{\theta}+\frac{n+1}{2}\right)\left(D_{\theta}+\frac{j-n}{2}\right)\left(D_{\theta}+\frac{n+1-j}{2}\right)\\\nonumber&+\frac{\theta^2}{4}\left(D_{\theta}+\frac{1}{2}\right)\left(D_{\theta}+1\right)\bigg]\hat{h}^n_{n-j}(\theta)=0,
\end{align}
while for $j$ even replace $n$ by $n-1$ and $j$ by $j-1$ in the above formula.
\end{rem}

\begin{rem}
The formula for $\hat h^n_{n-j}$ allows the development of asymptotic formulas
for large $n$ and $j$. We will not systematically explore this here but be
content to give a simple example in the case when $j=tn$, $0<t\le 1$. In this
case for $j$ odd
\begin{align}\label{transeqoddscaled}
 \hat h^n_{(1-t)n}(\theta)=&\frac{(-1)^{\frac{tn+1}{2}}2^{tn+1}(\frac{tn+1}{2})!(n+1)!(n+2)!}{(tn+2)!(2n+3)!(n+\frac{3}{2}-\frac{tn}{2})!}\sqrt{2n-2tn+1}(-i\theta)^{n+2}\nonumber\\&
\qquad\times\hypergeom23{\frac{n+3}{2},\ \frac{n+4}{2}}{\frac{tn+4}{2},\ n+\frac{5}{2},\ n+\frac{5-tn}{2}}{-\frac{\theta^2}{4}}.
\end{align}
The use of Stirling's formula (with the help of Mathematica) shows that 
\begin{align*}
&\frac{(\frac{tn+1}{2})!(n+1)!(n+2)!}{(tn+2)!(2n+3)!(n+\frac{3}{2}-\frac{tn}{2})!}\nonumber\\&=e^n(4n)^{-n}\left(1-\frac{t}{2}\right)^{-n(1-\frac{t}{2})-2}\frac{(2t)^{-\frac{nt}{2}}}{16\sqrt{2}t^{3/2}n^3}\left(1-\frac{104-12t+27t^2}{24(2-t)nt}+O\left(\frac{1}{n^2}\right)\right).
\end{align*}
Since the hypergeometric function can be expanded as
$$
\hypergeom23{\frac{n+3}{2},\ \frac{n+4}{2}}{\frac{tn+4}{2},\ n+\frac{5}{2},\ n+\frac{5-tn}{2}}{-\frac{\theta^2}{4}}=1-\frac{\theta^2}{4nt(2-t)}+O\left(\frac{1}{n^2}\right),
$$
we find
\begin{align*}
\hat h^n_{(1-t)n}(\theta)=\frac{(-1)^{\frac{tn+1}{2}}(-i\theta)^{n+2}e^n\sqrt{2n(1-t)+1}}
{2^{n+3/2} n^{n+3}t^{\frac{nt+3}{2}} (2-t)^{n(1-\frac{t}{2})+2}}
\left(1-\frac{104-12t+27t^2+6\theta^2}{24(2-t)tn}+O\left(\frac{1}{n^2}\right)\right).
\end{align*}
\end{rem}

\section{Recurrence Formulas}\label{se6}
The formulas for the entries in $D^n_1$ given by Lemma~\ref{altd} allow simple recurrences in $n$ and $i$. To this end we use formula (3.7.8) in \cite{aar} for balanced ${}_4 F_3$ series:
\begin{align}\label{4f3recc}
 &\frac{b(e-a)(f-a)(g-a)}{a-b-1}(F(a-,b+)-F)\\\nonumber&-\frac{a(e-b)(f-b)(g-b)}{b-a-1}(F(a+,b-)-F)+cd(a-b)F=0,
\end{align}
where $F=\hypergeom43{a,\ b,\ c,\ d}{e,\ f,\ g}{1}$, $F(a+,b-)=\hypergeom43{a+1,\ b-1,\ c,\ d}{e,\ f,\ g}{1}$ and
$F(a-,b+)=\hypergeom43{a-1,\ b+1,\ c,\ d}{e,\ f,\ g}{1}$. From this equation we find:
\begin{theorem}\label{recini} The entries of the matrix $D^{n}_1$ satisfy the following recurrence relations
\begin{equation}\label{drec}
(D^n_1)_{n-i+2,n-j}+k^1_{n,i,j}(D^n_1)_{n-i,n-j}+k^2_{n,i,j}(D^n_1)_{n-i-2,n-j}=0,
\end{equation}
where $i$ is odd, $j<i-2$,
\begin{align}
k^1_{n,i,j}&= h\left(\left(cd(a-b)-\frac{b(e - a)(f - a)(g - a)}{(a - b - 1)}\right)\frac{(1+a-b)}{a(e-b)(f-b)(g-b)}-1\right),\label{k1ioje}\\
k^2_{n,i,j}&= l\frac{b(e - a)(f - a)(g - a)(1+a-b)}{(a - b - 1)a(e-b)(f-b)(g-b)},\label{k2ioje}
\end{align}
and
\begin{align*} 
&a=\frac{j-i+1}{2},\quad b= -n+\frac{i+j}{2},\quad c=-\frac{j}{2},\quad d=n+\frac{3-j}{2}, \quad e=1,\quad f=\frac{1}{2},\\ 
&g=\frac{3}{2},\quad  h=\frac{\sqrt{2(n-i)+5}(-2n+i+j-1)}{\sqrt{2(n-i)+1}(i-j-2)}, \\
&l=\frac{\sqrt{2(n-i)+5}(2n-i-j-1)(2n-i-j+1)}{\sqrt{2(n-i)-3}(i-j)(i-j-2)}.
\end{align*}
\end{theorem}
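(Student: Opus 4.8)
The plan is to read Lemma~\ref{altd} as a dictionary identifying each entry $(D^n_1)_{n-i,n-j}$ with a fixed scalar prefactor times a balanced ${}_4F_3(1)$, and then to recognize the three‑term recurrence \eqref{drec} as the image under this dictionary of the contiguous relation \eqref{4f3recc}. Concretely, for $i$ odd and $j$ even I would write $(D^n_1)_{n-i,n-j}=A_{n,i,j}\,F$, where $F=\hypergeom43{a,b,c,d}{e,f,g}{1}$ has the parameters $a,\dots,g$ exactly as listed in the statement and $A_{n,i,j}$ is the scalar prefactor of \eqref{dwioje}. The decisive observation is that among $a,\dots,g$ only $a=(j-i+1)/2$ and $b=-n+(i+j)/2$ depend on $i$, while $c=-j/2$, $d=n+(3-j)/2$, and $e,f,g$ depend only on $n,j$ or are constants. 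Hence replacing $i$ by $i\mp2$ sends $a\to a\pm1$, $b\to b\mp1$ and fixes the rest; that is, $(D^n_1)_{n-i+2,n-j}=A_{n,i-2,j}\,F(a+,b-)$ and $(D^n_1)_{n-i-2,n-j}=A_{n,i+2,j}\,F(a-,b+)$, with $F(a\pm,b\mp)$ precisely the contiguous functions appearing in \eqref{4f3recc}. Since these shifts preserve $a+b$, they preserve the Saalschützian condition, so all three series are balanced and \eqref{4f3recc} applies; the constraint $j<i-2$ is what guarantees that even the outermost entry $(D^n_1)_{n-i+2,n-j}$ falls under the off‑diagonal formula of Lemma~\ref{altd}.

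With this dictionary in place I would substitute $F=(D^n_1)_{n-i,n-j}/A_{n,i,j}$, $F(a+,b-)=(D^n_1)_{n-i+2,n-j}/A_{n,i-2,j}$, and $F(a-,b+)=(D^n_1)_{n-i-2,n-j}/A_{n,i+2,j}$ into \eqref{4f3recc}, then normalize the coefficient of $(D^n_1)_{n-i+2,n-j}$ to $1$ by dividing through by $-\dfrac{a(e-b)(f-b)(g-b)}{b-a-1}\,A_{n,i-2,j}^{-1}$. Expanding the two differences $F(a-,b+)-F$ and $F(a+,b-)-F$ in \eqref{4f3recc} shows that the coefficient of the middle entry collects the $cd(a-b)F$ term together with the two $-F$ contributions, which is exactly the bracketed expression in \eqref{k1ioje}; the outer factors then reduce to the prefactor ratios $h=A_{n,i-2,j}/A_{n,i,j}$ and $l=A_{n,i-2,j}/A_{n,i+2,j}$, matching \eqref{k1ioje}--\eqref{k2ioje}.

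The remaining work is the bookkeeping identifying these two ratios with the stated closed forms. The only nonroutine point is the quotient of the Pochhammer factors $\bigl(n-\tfrac{i+j-1}{2}\bigr)_{(i+1)/2}$: writing $P=n-\tfrac{i+j-1}{2}$, the shift $i\to i-2$ turns this into $(P+1)_{(i-1)/2}$, and the quotient telescopes to the single factor $P=n-\tfrac{i+j-1}{2}$. Combining this with the elementary ratios coming from the sign $(-1)^{(i-j+1)/2}$, the double factorial $(i-j-2)!!$, the power $2^{-i/2}$, and the square roots $\sqrt{2(n-i)+1}$ gives $h=\dfrac{\sqrt{2(n-i)+5}\,(-2n+i+j-1)}{\sqrt{2(n-i)+1}\,(i-j-2)}$, while $l=h\cdot(A_{n,i,j}/A_{n,i+2,j})$ yields the displayed product once $(-2n+i+j-1)(-2n+i+j+1)$ is rewritten as $(2n-i-j+1)(2n-i-j-1)$. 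I expect this prefactor‑ratio computation — keeping the half‑integer Pochhammer lengths and the square‑root indices straight — to be the main, though entirely mechanical, obstacle, since the conceptual content is carried by the parameter identification of the first paragraph. The case $i,j$ both odd is handled identically, starting from \eqref{dwiojo} in place of \eqref{dwioje}.
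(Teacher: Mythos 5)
Your proposal is correct and takes essentially the same route as the paper: the paper's proof likewise applies the contiguous relation \eqref{4f3recc} to the representations of Lemma~\ref{altd}, with the shift $i\mapsto i\mp2$ acting only on the two parameters $a=\frac{j-i+1}{2}$ and $b=-n+\frac{i+j}{2}$ as $(a,b)\mapsto(a\pm1,b\mp1)$, and your identifications $h=A_{n,i-2,j}/A_{n,i,j}$ and $l=A_{n,i-2,j}/A_{n,i+2,j}$ together with the telescoping of $\bigl(n-\frac{i+j-1}{2}\bigr)_{\frac{i+1}{2}}$ reproduce exactly the stated coefficients. The one point you gloss over slightly, which the paper states explicitly, is that for $i,j$ both odd the analogous computation from \eqref{dwiojo} produces \emph{different} values of $a,b,c,d,e,f,g,h,l$, and it must be verified (mechanically) that these nonetheless yield the same final expressions for $k^1_{n,i,j}$ and $k^2_{n,i,j}$ as the $j$-even parameters listed in the theorem.
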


\begin{proof}
If $j$ is even, the recurrence relation \eqref{drec} follows from equations \eqref{dwioje} and \eqref{4f3recc}. If $j$ is odd, we use equations \eqref{dwiojo} and \eqref{4f3recc} to deduce that \eqref{drec} holds with $k^1$ and $k^2$ defined by equations \eqref{k1ioje}-\eqref{k2ioje} where $a=\frac{j-i+2}{2}$, $b= -n+\frac{i+j+1}{2}$, $c=-\frac{j-1}{2}$, $d=n+\frac{4-j}{2}$, $e=2$, $f=\frac{3}{2}$, $g=\frac{3}{2}$, $h=\frac{\sqrt{2(n-i)+5}(-2n+i+j-2)}{\sqrt{2(n-i)+1}(i-j-1)}$, and $l=\frac{\sqrt{2(n-i)+5}(2n-i-j)(2n-i-j+2)}{\sqrt{2(n-i)-3}(i-j+1)(i-j-1)}$. Although $a$, $b$, $c$, $d$, $e$, $f$, $g$, $h$ and $l$ are different in this case, they lead to the same formulas for $k^1$ and $k^2$.
\end{proof}

Likewise we can obtain a recurrence relation in $n$.

\begin{theorem}\label{recinn} The entries of the matrix $D^{n}_1$ satisfy the following recurrence relations
\begin{equation}\label{drecn}
(D^{n+1}_1)_{n-i+1,n-j+1}+k^1_{n,i,j}(D^n_1)_{n-i,n-j}+k^2_{n,i,j}(D^{n-1}_1)_{n-i-1,n-j-1}=0,
\end{equation}
where $i$ is odd, $j<i<n$, $k^1$ and $k^2$ are given in equations \eqref{k1ioje}-\eqref{k2ioje}, and 
\begin{align*} 
&a=n+\frac{3-j}{2},\quad b= -n+\frac{i+j}{2},\quad c=-\frac{j}{2},\quad d=\frac{j-i+1}{2},\quad e=1,\quad f=\frac{1}{2},\\ 
&g=\frac{3}{2}, \quad h=\frac{\sqrt{2(n-i)+3}\sqrt{2(n-j)+3}(2n-i-j+1)}{\sqrt{2(n-i)+1}\sqrt{2(n-j)+1}(2n-j+2)},\\
&l=\frac{\sqrt{2(n-i)+3}\sqrt{2(n-j)+3}(2n-i-j-1)(2n-i-j+1)}{\sqrt{2(n-i)-1}\sqrt{2(n-j)-1}(2n-j)(2n-j+2)}.
\end{align*}
\end{theorem}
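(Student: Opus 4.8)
The plan is to follow the proof of Theorem~\ref{recini}, replacing the $i$-contiguity of the balanced ${}_4F_3$ used there by an $n$-contiguity, so that shifting $n$ by $\pm1$ in the representation of Lemma~\ref{altd} corresponds to a single application of \eqref{4f3recc}. I first treat $i$ odd and $j$ even, where \eqref{dwioje} applies. Write $(D^n_1)_{n-i,n-j}=A_n\,F$, where
\[
A_n=(-1)^{\frac{i-j+1}{2}}\frac{(j+1)!!(i-j-2)!!}{2^{i/2}\left(n-\frac{i+j-1}{2}\right)_{\frac{i+1}{2}}}\sqrt{2(n-i)+1}\sqrt{2(n-j)+1}
\]
and $F=\hypergeom43{-\frac{j}{2},\ \frac{j-i+1}{2},\ n+\frac{3-j}{2},\ -n+\frac{i+j}{2}}{1,\ \frac{1}{2},\ \frac{3}{2}}{1}$. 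Since the indices of $(D^{n+1}_1)_{n-i+1,n-j+1}$ and $(D^{n-1}_1)_{n-i-1,n-j-1}$ keep $i$ and $j$ unchanged, replacing $n$ by $n\pm1$ in \eqref{dwioje} fixes the first two upper parameters and all three lower parameters of $F$, while sending $a=n+\frac{3-j}{2}$ to $a\pm1$ and $b=-n+\frac{i+j}{2}$ to $b\mp1$. Hence $(D^{n+1}_1)_{n-i+1,n-j+1}=A_{n+1}F(a+,b-)$ and $(D^{n-1}_1)_{n-i-1,n-j-1}=A_{n-1}F(a-,b+)$, with $a,b,c,d,e,f,g$ exactly as in the statement.

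I would then normalize \eqref{4f3recc} so that the coefficient of $F(a+,b-)$ equals $A_{n+1}$. Multiplying \eqref{4f3recc} by $\mu=A_{n+1}(1+a-b)/[a(e-b)(f-b)(g-b)]$ (using $b-a-1=-(1+a-b)$) converts it into a relation of the shape $A_{n+1}F(a+,b-)+k^1A_nF+k^2A_{n-1}F(a-,b+)=0$, so that $k^1=(\mu/A_n)\times(\text{coefficient of }F)$ and $k^2=(\mu/A_{n-1})\times b(e-a)(f-a)(g-a)/(a-b-1)$. The latter immediately yields \eqref{k2ioje} once $\mu/A_{n-1}$ is identified. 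For $k^1$, the coefficient of $F$ in \eqref{4f3recc} equals $cd(a-b)-b(e-a)(f-a)(g-a)/(a-b-1)+a(e-b)(f-b)(g-b)/(b-a-1)$; when multiplied by $\mu/A_n$ the last summand produces the factor $(1+a-b)/(b-a-1)=-1$, which is precisely the ``$-1$'' appearing in \eqref{k1ioje}.

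Everything then reduces to the two prefactor ratios, which telescope: the sign, the double factorials and the power of $2$ in $A_n$ are independent of $n$ and cancel, so that $A_{n+1}/A_n$ and $A_{n+1}/A_{n-1}$ come only from the single Pochhammer symbol $\left(n-\frac{i+j-1}{2}\right)_{(i+1)/2}$ and the two square roots. A short computation gives $A_{n+1}/A_n=h$ and $A_{n+1}/A_{n-1}=l$ with $h,l$ exactly as stated, so $k^1$ and $k^2$ take the forms \eqref{k1ioje}--\eqref{k2ioje}. The case $i,j$ both odd is handled the same way starting from \eqref{dwiojo}, now with $a=n+\frac{4-j}{2}$, $b=-n+\frac{i+j+1}{2}$, $c=-\frac{j-1}{2}$, $d=\frac{j-i}{2}+1$, $e=2$, $f=g=\frac{3}{2}$; here the prefactor carries the additional $n$-dependent factor $(2n-j+2)$, which must be folded into the ratios. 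As in Theorem~\ref{recini}, the main point --- and the step I expect to demand the most care --- is to verify that this genuinely different parameter set produces the same closed forms for $k^1$ and $k^2$; this is a finite algebraic identity, checked by reducing both expressions to a common rational function of $n,i,j$. The remaining entries, with $i$ even, are given by \eqref{oded}.
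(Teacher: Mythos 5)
Your proposal is correct and follows exactly the route the paper intends: the paper gives no separate proof of Theorem~\ref{recinn} beyond the remark ``Likewise,'' meaning the argument of Theorem~\ref{recini} with the contiguous relation \eqref{4f3recc} now applied in the $n$-dependent parameter pair $a=n+\frac{3-j}{2}$, $b=-n+\frac{i+j}{2}$ of Lemma~\ref{altd}, which is precisely what you do. Your prefactor ratios $A_{n+1}/A_n=h$ and $A_{n+1}/A_{n-1}=l$ check out, and your handling of the $j$-odd case (different parameters, including the extra $n$-dependent factor $2n-j+2$, yielding the same $k^1,k^2$) matches the paper's own treatment of that parity split, a coincidence the authors credit to a referee in the acknowledgements.
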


\section*{Acknowledgements}
We would like to thank a referee for carefully reading the manuscript and the
observation that the recurrence coefficients in equations~\eqref{drec} and
\eqref{drecn} are the same for $j$ even and odd.

\end{document}